\documentclass{amsart} 
\usepackage{amssymb,latexsym}


\usepackage[OT2,T1]{fontenc}

\theoremstyle{plain}
\newtheorem{theorem}{Theorem}[section]
\newtheorem{lemma}[theorem]{Lemma}
\newtheorem{corollary}[theorem]{Corollary}
\newtheorem{proposition}[theorem]{Proposition}

\theoremstyle{definition}
\newtheorem{definition}{Definition}[section]

\theoremstyle{remark}
\newtheorem{remark}{Remark}[section]

\DeclareMathOperator{\ind}{ind}%
\DeclareMathOperator{\Proj}{Proj}%
\DeclareMathOperator{\ran}{ran}%
\DeclareMathOperator{\coker}{coker}%
\newcommand{\N}{\mathbf N}

\newcommand{\e}{\varepsilon}

\newcommand{\skp}[2]{\left<#1,#2\right>}

\renewcommand{\a}{\alpha}


\numberwithin{equation}{section} 

\begin{document}
\title[Fredholm operators on $C^*$-algebras]{Fredholm operators on $C^*$-algebras} 

\author{Dragoljub J. Ke\v cki\' c}
\address{University of Belgrade\\ Faculty of Mathematics\\ Student\/ski trg 16-18\\ 11000 Beograd\\ Serbia}

\email{keckic@matf.bg.ac.rs}

\author{Zlatko Lazovi\'c}

\address{University of Belgrade\\ Faculty of Mathematics\\ Student\/ski trg 16-18\\ 11000 Beograd\\ Serbia}

\email{zlatkol@matf.bg.ac.rs}

\thanks{The authors was supported in part by the Ministry of education and science, Republic of Serbia, Grant \#174034.}

\begin{abstract}
The aim of this note is to generalize the notion of Fredholm operator to an arbitrary $C^*$-algebra. Namely,
we define "finite type" elements in an axiomatic way, and also we define Fredholm type element $a$  as such
element of a given $C^*$-algebra for which  there are finite type elements $p$ and  $q$ such that
$(1-q)a(1-p)$ is "invertible". We derive index theorem for such operators. In applications we show that
classical Fredholm operators on a Hilbert space, Fredholm operators in the sense of Breuer, Atiyah and Singer
on a properly infinite von Neumann algebra, and Fredholm operators on Hilbert $C^*$-modules over an unital
$C^*$-algebra in the sense of Mishchenko and Fomenko are special cases of our theory.
\end{abstract}


\subjclass[2010]{47A53, 46L08, 46L80}

\keywords{$C^*$-algebra, Fredholm operators, $K$ group, index}

\maketitle

\section{Introduction}

Fredholm operators have been investigated for many years. Initially, they was considered as those operators
acting on a Banach space with finite dimensional kernel and cokernel. Their index is defined as a difference
of dimensions of the kernel and cokernel. The most important properties of Fredholm operators are the
following

1. The index theorem. It asserts that given two Fredholm operators $T$ and $S$, the operator $TS$ is also
Fredholm and
$$\ind(TS)=\ind T+\ind S$$

2. Theorem of Atkinson. It asserts that the operator $T$ is Fredholm if and only if it is invertible modulo
compact operators, or equivalently if and only if its image in the Calkin algebra $B(X)/C(X)$ is invertible,
where $C(X)$ denote the algebra of compact operators.

3. Perturbation theorem. If $T$ is a Fredholm operator, then the operator $T+K$ is Fredholm as well, provided
that $K$ is compact. In this case $\ind(T+K)=\ind T$.

4. Continuity of index. The Fredholm operators form an open set in the norm topology and the index is
constant on each connected component of this set.

The generalization of Fredholm theory to the level of von Neumann algebras was initially done by Breuer
\cite{Breuer68,Breuer69}, but it became famous after Atiyah's work \cite{Atiyah}. In order to generalize
earlier result of him and Singer, the well known index theorem, to noncompact manifolds he considered the
operators with kernel and cokernel that don't belong to finite dimensional subspaces, but affiliated to some
von Neumann algebra. He defined the dimension of such subspaces as the trace of the corresponding projection
in the appropriate von Neumann algebra, see \cite{Atiyah}. Soon after that Mischenko and Fomenko introduced
the notion of Fredholm operator in the framework of Hilbert $C^*$ modules, see \cite{Mischenko}. It was
repeated with a different approach by Mingo, see \cite{Mingo} and also important references therein. Several
decades ago, there are, also, some attempts to establish axiomatic Fredholm theory in the framework of von
Neumann algebras $G$ (relative to ideal $\mathcal I$). Fredholm operators are defined, in this case, as
invertible elements in the quotient space $G/\mathcal {I}$, and the index is defined as an element from
$C(\Omega)$ (the set of all continuous function on $\Omega$, where $C(\Omega)$ is isomorphic to the center
$Z$ of $G$), see \cite{Coburn}, \cite{Olsen} and references therein. There are, also, other attempts in
generalization of Fredholm theory, for instance \cite{Alvarez}.

The aim of this note is to single out the properties of finite rank operators which ensure the development of
Fredholm theory. In other words we shall introduce axiomatic foundation of Fredholm theory. We shall deal
within a framework of a unital $C^*$-algebra $\mathcal A$ and its faithful representation $\rho:\mathcal A\to
B(H)$ as the subalgebra $\rho(\mathcal A)$ of the algebra of all bounded operators on some Hilbert space $H$.
Further, we prove that the standard Fredholm operators, Fredholm operators in the sense of Atiyah and Singer
on $II_\infty$ factors, and Fredholm operators in the sense of Mishchenko and Fomenko  are special cases of our theory.

Throughout this paper we shall assume that a given $C^*$-algebra is always unital, even if this is not
explicitly mentioned. By $\ker$ ($\coker$) we shall denote the kernel(cokernel) of some operator. If $X$ is a
closed subspace of some Hilbert space $H$, $P_X$ will denote the orthogonal projection on $X$. Sometimes we
shall omit the word {\em orthogonal}, i.e, projection will mean orthogonal projection. The letter $1$ will
stand for unit element in an abstract algebra, whereas $I$ will stand for identity operator on some Hilbert
space. Similarly, we use small letters $a$, $b$, $t$, etc.\ to denote elements of an abstract algebra, whereas we use
capital letters $A$, $B$, $T$, etc.\ to denote operators on a concrete Hilbert space.

\begin{definition}\label{definicija konacnih}Let $\mathcal A$ be an unital $C^*$-algebra, and let $\mathcal F\subseteq\mathcal A$ be a
subalgebra which satisfies the following conditions:

$(i)$ $\mathcal F$ is a selfadjoint ideal in $\mathcal A$, i.e.~for all $a\in\mathcal A$, $b\in\mathcal F$
there holds $ab,\,ba\in\mathcal F$ and $a\in\mathcal F$ implies $a^*\in\mathcal F$;

$(ii)$ There is an approximate unit $p_{\alpha}$ for $\mathcal F$, consisting of projections;

$(iii)$ If $p,q\in\mathcal F$ are projections, then there exists $v\in\mathcal A$, such that $vv^*=q$ and
$v^*v\bot p$, i.e.~$v^*v+p$ is a projection as well;

Such a family we shall call {\em finite type elements}. In further, we shall denote it by~${\mathcal F}$.
\end{definition}

\begin{definition}\label{Definicija K grupe}
Let $\mathcal A$ be an unital $C^*$-algebra, and let $\mathcal F\subseteq\mathcal A$ be an algebra of finite type elements.

In the set $\Proj(\mathcal F)$ we define the equivalence relation:
$$p\sim q\quad\Leftrightarrow\quad\exists v\in\mathcal A\:\:vv^*=p,\:\:v^*v=q,$$
i.e.~Murray - von Neumann equivalence. The set $S(\mathcal F)=\Proj(\mathcal F)/\sim$ is a commutative
semigroup with respect to addition, and the set $K(\mathcal F)=G(S(\mathcal F))$, where $G$ denotes the
Grothendic functor, is a commutative group.

\end{definition}

\section{Results}

We devide this section into four subsections. First three of them, {\em Well known Hilbert space Lemmata},
{\em Almost invertibility} and {\em Approximate units technique} contain introductory material necessary for
the last one, {\em Index and its properties}, which contains the main results.

\subsection*{Well known Hilbert space Lemmata}

First, we list, and also prove, three elementary statements concerning operators on a Hilbert space.

The first Lemma establishes that two projections, close enough to each other, are unitarily equivalent, and
also that obtained unitary is close to the identity operator.

\begin{lemma}\label{p-q manje 1} Let $P$, $Q\in B(H)$, let $H$ be a Hilbert space and let $||P-Q||<1$. Then there
are partial isometries $V$, $W$ such that
\begin{equation}\label{Ris-Nadj}
VV^*=P,\qquad V^*V=Q,
\end{equation}
$$WW^*=I-P,\qquad W^*W=I-Q.$$
Moreover, there is a unitary $U$ such that
$$U^*PU=Q,\qquad U^*(I-P)U=I-Q.$$

In addition, $V$, $W$, $U\in C^*(I,P,Q)$ - a unital $C^*$-algebra generated by $P$ and $Q$.

Finally, if $||P-Q||<1/2$, we have the estimate
\begin{equation}\label{ocena_unitarnog}
||I-U||\le C||P-Q||,
\end{equation}
where $C$ is an absolute constant, i.e.\ it does not depend neither on $P$ nor on $Q$.
\end{lemma}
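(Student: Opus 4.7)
\medskip

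The natural tool is the classical Riesz--Nagy rotation. The plan is to introduce the auxiliary element
$$S = PQ + (I-P)(I-Q),$$
verify that $S$ is normal and invertible with $|S|$ a concrete function of $(P-Q)^2$, form the unitary $U$ from the polar decomposition of $S$, and then read off $V$, $W$ as $PU$ and $(I-P)U$.

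First I would compute $SS^{*}$ and $S^{*}S$ directly by expanding, using $P^2=P$, $Q^2=Q$ and the cancellations $PQ(I-Q)=0$, $(I-P)P=0$. Both products collapse to $I-(P-Q)^{2}$, so $S$ is normal and, because $\|(P-Q)^{2}\|<1$, invertible. A straightforward multiplication also gives the intertwining identity $PS=SQ$. Setting $U:=S|S|^{-1}$ with $|S|=(I-(P-Q)^{2})^{1/2}$ produces a unitary; normality of $S$ makes $U$ commute with $|S|$, and since $Q$ commutes with $(P-Q)^{2}$ (this is an immediate direct check) it commutes with $|S|^{\pm 1}$, so $PS=SQ$ upgrades to $PU=UQ$, i.e.\ $U^{*}PU=Q$ and consequently $U^{*}(I-P)U=I-Q$. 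Defining $V:=PU$ and $W:=(I-P)U$ then gives (\ref{Ris-Nadj}) and its complement at once.

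The membership $V,W,U\in C^{*}(I,P,Q)$ is automatic: $S$ lies in this algebra, $|S|^{2}=I-(P-Q)^{2}$ does too, and $|S|^{-1}$ is obtained from $|S|^{2}$ by continuous functional calculus (the spectrum of $|S|^{2}$ is contained in $[1-\|P-Q\|^{2},1]\subset(0,1]$, so the square-root and inverse are continuous on a neighbourhood of it).

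For the quantitative bound (\ref{ocena_unitarnog}) I would write
$$U-I = (S-|S|)|S|^{-1} = \bigl((S-I)+(I-|S|)\bigr)|S|^{-1}$$
and estimate the three factors separately. A short computation shows $S-I = -P(I-Q)-(I-P)Q$, giving $\|S-I\|\le 2\|P-Q\|$; the scalar inequality $1-(1-x)^{1/2}\le x$ on $[0,1]$ applied by functional calculus yields $\|I-|S|\|\le\|(P-Q)^{2}\|\le\|P-Q\|^{2}$; and the hypothesis $\|P-Q\|<1/2$ forces the spectrum of $|S|^{2}$ into $[3/4,1]$, hence $\||S|^{-1}\|\le 2/\sqrt{3}$. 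Combining these bounds produces $\|I-U\|\le C\|P-Q\|$ with an absolute constant $C$ (one can take $C=6$).

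I do not expect a real obstacle; the only thing one must be careful about is the order of factors in the definition of $S$ (swapping $P$ and $Q$ gives the wrong intertwining relation), and the verification that $Q$ commutes with $|S|$, without which one cannot pass from $PS=SQ$ to $PU=UQ$.
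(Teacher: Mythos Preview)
Your argument is correct and complete; it is a genuinely different route from the one the paper takes.

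The paper cites Riesz--Nagy for the explicit formula $V = P(I+P(Q-P)P)^{-1/2}Q$, constructs $W$ analogously for the complementary pair $I-P$, $I-Q$, and then sets $U=V+W$. The estimate on $\|I-U\|$ is obtained by bounding $\|Q-V\|$ and $\|(I-Q)-W\|$ separately via the Taylor expansion of $(1+t)^{-1/2}$.

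You instead build the unitary $U$ in one stroke as the polar part of $S=PQ+(I-P)(I-Q)$, using that $S$ is normal with $S^*S=SS^*=I-(P-Q)^2$ and that $PS=SQ$; then $V=PU$ and $W=(I-P)U$ drop out automatically. This is the Kato-style rotation argument. Its advantages are that it is self-contained (no external reference), that the intertwining $U^*PU=Q$ is immediate rather than something to be checked after assembling $U$ from pieces, and that the bound (\ref{ocena_unitarnog}) follows directly from $\|S-I\|\le 2\|P-Q\|$ and $\| |S|^{-1}\|\le 2/\sqrt3$ without a series expansion. In fact your estimate gives $C=5/\sqrt3<3$, sharper than the $C=6$ you quote, though any absolute constant is enough here. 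The paper's approach has the minor advantage of producing an explicit closed formula for $V$ alone, should that ever be needed in isolation.
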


\begin{proof} The existence of $V\in B(H)$ with properties (\ref{Ris-Nadj}) was proved in \cite[\S
105, page 268]{RisNadjbook}. Also, it is given that $V=P(I+P(Q-P)P)^{-1/2}Q\in C^*(I,P,Q)$.

Since $||(I-P)-(I-Q)||=||P-Q||$, we can apply previous reasoning to $I-P$ and $I-Q$ to obtain $W$ with
required properties. Clearly, $U=V+W$ is a unitary we are looking for.

Finally, to obtain (\ref{ocena_unitarnog}) we have
$$||Q-V||=||Q-P(I+P(Q-P)P)^{-1/2}Q||\le||Q-P(I+P(Q-P)P)^{-1/2}||.$$
Let $\a_n$ be the coefficients in Taylor expansion of the function $t\mapsto(1+t)^{-1/2}$. Then
$(I+P(Q-P)P)^{-1/2}=I+\sum_{n=1}^\infty\a_n(P(Q-P)P)^n$ and hence
\begin{multline*}||Q-V||\le||Q-P-\sum_{n=1}^\infty\a_n(P(Q-P)P)^n||\le||Q-P||+\sum_{n=1}^\infty|\a_n|\,||Q-P||^n=\\
    ||Q-P||(1+\sum_{n=1}^\infty|\a_n|\,||Q-P||^{n-1})\le C_1||Q-P||,
\end{multline*}
where $C_1=1+\sum_{n=1}^\infty|\a_n|/2^{n-1}<+\infty$. Thus
$||I-U||=||I-Q+Q-W-V||\le||Q-V||+||I-Q-W||\le2C_1||Q-P||$. (Indeed, since $C_1$ does not depend on $P$, $Q$,
applying the previous reasoning to the projections $I-P$, $I-Q$ we have $||I-Q-W||\le C_1||(I-Q)-(I-P)||$.)
\end{proof}

The second Lemma characterizes "almost orthogonal" projections.

\begin{lemma}\label{norma od PQ}
Let $P$, $Q\in B(H)$ be projections, and let for all $\xi\in P(H)$ and all $\eta\in Q(H)$  holds
$$|\skp\xi\eta|\le c||\xi||\,||\eta||.$$
Then $||PQ||\le c$.
\end{lemma}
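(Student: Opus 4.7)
The plan is to unwind the operator norm of $PQ$ via duality and reduce it to the given bilinear inequality. Specifically, for any $x \in H$, I would write
$$\|PQx\| = \sup_{\|y\|\le 1} |\langle PQx, y\rangle| = \sup_{\|y\|\le 1} |\langle Qx, Py\rangle|,$$
using selfadjointness of $P$.

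Now the vectors $\eta := Qx$ and $\xi := Py$ lie in $Q(H)$ and $P(H)$ respectively, so the hypothesis applies directly:
$$|\langle Qx, Py\rangle| \le c\,\|Qx\|\,\|Py\| \le c\,\|x\|\,\|y\|.$$
Taking the supremum over $\|y\| \le 1$ gives $\|PQx\| \le c\|x\|$, and supremum over $\|x\|\le 1$ gives the result.

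There is essentially no obstacle here — the lemma is just the statement that the operator norm of $PQ$ equals the norm of the bilinear form $(\xi,\eta) \mapsto \langle \xi,\eta\rangle$ restricted to $P(H)\times Q(H)$, and the argument is a one-line duality computation. The only thing to be careful about is the direction of the adjoint (making sure $\langle PQx, y\rangle = \langle Qx, Py\rangle$ rather than an asymmetric variant), which is immediate from $P = P^*$.
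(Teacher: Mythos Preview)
Your proof is correct and takes essentially the same approach as the paper: both use self-adjointness of the projections to rewrite $\langle PQx, y\rangle = \langle Qx, Py\rangle$ and then apply the hypothesis to the pair $(Py, Qx)\in P(H)\times Q(H)$. The only cosmetic difference is that the paper bounds $\|QP\|$ first and then passes to the adjoint, whereas you bound $\|PQ\|$ directly.
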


\begin{proof} Let $\xi$, $\eta\in H$. Then $P\xi\in P(H)$, $Q\eta\in Q(H)$, and
$$|\skp{QP\xi}\eta|=|\skp{P\xi}{Q\eta}|\le c||P\xi||\,||Q\eta||\le c||\xi||\,||\eta||.$$
Therefore $||QP||\le c$ and also $||PQ||=||(QP)^*||\le c$.
\end{proof}

The last Lemma in this subsection deals with the adjoint of an isomorphism between two different subspaces.

\begin{lemma}\label{adjointy is bounded below}
Let $H_1$, $H_2$ be Hilbert spaces, $K\le H_1$, $L\le H_2$. Also, let $T:H_1\to H_2$ be a bounded mapping
that maps $K$ bijectively to $L$ and $T|_{K^\perp}=0$. Then $T^*$ maps bijectively $L$ to $K$, and
$T^*|_{L^\perp}=0$.
\end{lemma}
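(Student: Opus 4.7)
The plan is to split the statement into three pieces: vanishing on $L^\perp$, mapping into $K$, and bijectivity onto $K$. The first two are pure definition-chasing via the defining property $\langle T^*y,x\rangle=\langle y,Tx\rangle$, so I would dispatch them first.

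For $T^*|_{L^\perp}=0$: take $y\in L^\perp$ and an arbitrary $x\in H_1$. Writing $x=P_Kx+P_{K^\perp}x$ and using $T|_{K^\perp}=0$ gives $Tx=T(P_Kx)\in L$, whence $\langle T^*y,x\rangle=\langle y,Tx\rangle=0$. Since $x$ was arbitrary, $T^*y=0$. For $T^*(L)\subseteq K$: take $y\in L$ and $x'\in K^\perp$; then $\langle T^*y,x'\rangle=\langle y,Tx'\rangle=\langle y,0\rangle=0$, so $T^*y\perp K^\perp$, i.e.\ $T^*y\in K$.

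The remaining issue is showing $T^*\colon L\to K$ is a \emph{bijection}. Here I would use that $K$ and $L$, as closed subspaces, are themselves Hilbert spaces and that $T|_K\colon K\to L$ is, by hypothesis, a bounded linear bijection between Banach spaces. The open mapping theorem then furnishes a bounded inverse, so $T|_K$ is a topological isomorphism $K\to L$. A direct computation shows that for $y\in L$ and $x\in K$,
\[
\langle T^*y,x\rangle_{H_1}=\langle y,Tx\rangle_{H_2}=\langle y,T|_K x\rangle_L,
\]
so that the restriction $T^*|_L\colon L\to K$ coincides with the Hilbert-space adjoint $(T|_K)^*$. Since the adjoint of a bounded invertible operator between Hilbert spaces is invertible (with inverse $((T|_K)^{-1})^*$), this yields the desired bijectivity of $T^*|_L\colon L\to K$.

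I do not foresee a serious obstacle here; the only subtlety worth mentioning is that the argument uses the open mapping theorem implicitly to know that $T|_K$ has a bounded inverse, which in turn relies on both $K$ and $L$ being closed subspaces of their ambient Hilbert spaces (as indicated by the notation $K\le H_1$, $L\le H_2$).
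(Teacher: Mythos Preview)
Your proof is correct. Both your argument and the paper's rely on the open mapping theorem applied to the bounded bijection $T|_K:K\to L$, but they diverge after that point. The paper first records $\ker T^*=(\ran T)^\perp=L^\perp$ and $\overline{\ran T^*}=(\ker T)^\perp=K$, obtaining injectivity of $T^*$ on $L$ and density of $T^*(L)$ in $K$; it then shows by an explicit supremum estimate that $\|T^*\eta\|\ge c\|\eta\|$ for $\eta\in L$ (with $c$ the lower bound for $T$ on $K$), hence $T^*(L)$ is closed and equals $K$. You instead identify the restriction $T^*|_L:L\to K$ with the Hilbert-space adjoint $(T|_K)^*$ and invoke the standard fact that the adjoint of a bounded invertible operator is invertible. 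Your route is somewhat more conceptual and slightly shorter; the paper's route has the minor bonus of producing the explicit bound $\|T^*\eta\|\ge c\|\eta\|$, though this is not used later.
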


\begin{proof} Since $\ker T^*=(\ran T)^\perp=L^\perp$, it follows $T^*|_{L^\perp}=0$. In particular, $T^*$ is
injective on $L$. Also, $\overline{\ran T^*}=(\ker T)^\perp=K$, i.e. $\ran T^*$ is dense in $K$.

Since $T$ maps bijectively $K$ to $L$ ($K$, $L$ closed), by open mapping theorem, $T$ is bounded below on
$K$, i.e.\ there is $c>0$ such that $||T\xi||\ge c||\xi||$ for all $\xi\in K$. Let $\eta\in L$. Then
$T^*\eta\in K$ and
\begin{multline*}||T^*\eta||=\sup_{\substack{\xi\in K\\||\xi||=1}}|\skp{\xi}{T^*\eta}|=
    \sup_{\substack{\xi\in K\\||\xi||=1}}|\skp{T\xi}{\eta}|=
    \sup_{\substack{\xi\in K\\||\xi||=1}}||T\xi||\,\left|\skp{\frac{T\xi}{||T\xi||}}{\eta}\right|\ge\\
    c\sup_{\substack{\xi\in K\\||\xi||=1}}\left|\skp{\frac{T\xi}{||T\xi||}}{\eta}\right|=
    c\sup_{\substack{\eta'\in L\\||\eta'||=1}}\left|\skp{\eta'}{\eta}\right|=c||\eta||.
\end{multline*}
The last equality is due to $T(K)=L$.

Therefore, $T^*$ is topologically injective on $L$. Hence its range is closed and we are done.
\end{proof}

\subsection*{Almost invertibility}

In this subsection, we introduce the notion of almost invertibility, i.e.\ invertibility up to a pair of
projections, following definitions of Fredholm operators in the framework of Hilbert $C^*$-modules
\cite[Definition 2.7.4]{TrMa}, and \cite[Chapter 17]{Wegge}.

\begin{definition}
Let $a\in\mathcal A$ and let $p$, $q\in\mathcal F$. We say that $a$ is invertible up to pair $(p,q)$ if the
element $a'=(1-q)a(1-p)$ is invertible, i.e., if there is some $b\in \mathcal A$ with $b=(1-p)b(1-q)$ (and
immediately $bq=0$, $pb=0$, $b=(1-p)b=b(1-q)$) such that
$$a'b=1-q,\qquad ba'=1-p.$$
We refer to such $b$ as almost inverse of $a$, or $(p,q)$-inverse of $a$.
\end{definition}

First, we establish that almost invertible elements make an open set, and that almost inverse of $a$ (in a
certain sense) continuously changes with respect to $a$, as well as with respect to $p$ and $q$

\begin{lemma}\label{mala pretumbacija} Let $a$ be invertible up to $(p,q)$ and let $b$ be $(p,q)$-inverse of $a$.

a) The element $a+c$ is also invertible up to $(p,q)$ for every $c\in A$, which satisfies $||c||<||b||^{-1}$.
If $b_1$ denote $(p,q)$-inverse of $a+c$, then
\begin{equation}\label{ocena_inverza_perturbacije}
||b_1||\le\frac{||b||}{1-||b||\,||c||}.
\end{equation}

b) If $||p-p'||$, $||q-q'||<\min\{1/2,1/4C||a||\,||b||\}$, where $C$ is the constant from
(\ref{ocena_unitarnog}) then $a$ is also invertible up to $(p',q')$. Moreover, if $b'$ is $(p',q')$-inverse
of $a$, then $||b'||\le 2||b||$.
\end{lemma}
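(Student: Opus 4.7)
For part (a), I would run a Neumann series, but performed in the corner $(1-q)\mathcal A(1-q)$. Setting $c':=(1-q)c(1-p)$, the element $c'b=(1-q)c(1-p)b(1-q)$ lies in that corner and has norm at most $||c||\,||b||<1$. Hence $s:=\sum_{n\ge 0}(-c'b)^n$ converges and inverts $(1-q)+c'b$ in the corner, with $||s||\le 1/(1-||b||\,||c||)$. I then take $b_1:=bs$; the identities $b=(1-p)b(1-q)$ and $s=(1-q)s(1-q)$ yield the correct bimodule structure and the right identity
\[
(1-q)(a+c)(1-p)\cdot b_1=(a'+c')bs=((1-q)+c'b)s=1-q,
\]
where $a'=(1-q)a(1-p)$. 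The left identity is produced symmetrically from $t:=\sum_{n\ge 0}(-bc')^n\in(1-p)\mathcal A(1-p)$, and the standard one-line argument $tb=tb(a'+c')b_1=(1-p)b_1=b_1$ identifies $tb$ with $b_1$. The bound \eqref{ocena_inverza_perturbacije} then follows directly from $||b_1||\le||b||\,||s||$.

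For part (b), the plan is to reduce $(p',q')$ to $(p,q)$ by conjugating with unitaries close to the identity and then invoke part (a). Applying Lemma \ref{p-q manje 1} to the images $\rho(p),\rho(p')\in B(H)$ under the faithful representation $\rho$ and noting that the unitary produced lies in $C^*(I,\rho(p),\rho(p'))\subseteq\rho(\mathcal A)$, I pull it back to obtain $u\in\mathcal A$ unitary with $u^*pu=p'$ and $||1-u||\le C||p-p'||$; analogously, $v\in\mathcal A$ unitary with $v^*qv=q'$ and $||1-v||\le C||q-q'||$. The commutation identities $u^*(1-p)=(1-p')u^*$ and $v(1-q')=(1-q)v$ give
\[
v(1-q')a(1-p')u^*=(1-q)\widetilde a(1-p),\qquad\widetilde a:=vau^*.
\]
A short computation shows $\widetilde a-a=(v-1)au^*+a(u^*-1)$ has norm at most $C||a||(||p-p'||+||q-q'||)<1/(2||b||)$ by hypothesis, so part (a) applied to the perturbation $\widetilde a-a$ of $a$ supplies a $(p,q)$-inverse $\widetilde b$ of $\widetilde a$ with $||\widetilde b||\le 2||b||$. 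Finally, setting $b':=u^*\widetilde b\, v$, the same commutation identities together with the defining relations of $\widetilde b$ routinely verify $b'=(1-p')b'(1-q')$, $(1-q')a(1-p')\cdot b'=1-q'$, $b'\cdot (1-q')a(1-p')=1-p'$, and $||b'||\le 2||b||$.

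The only genuinely delicate point is the transfer of Lemma \ref{p-q manje 1} from Hilbert space operators to abstract $C^*$-algebra elements; this is handled cleanly by the faithful representation together with the assertion in that lemma that the constructed unitary belongs to $C^*(I,P,Q)$. Beyond that, the verifications in part (b) — while involving several identities — are mere algebraic bookkeeping with the relations $u^*(1-p)=(1-p')u^*$ and $v(1-q')=(1-q)v$, plus the observation $\widetilde b(1-q)=\widetilde b$ that absorbs the extra projection factors appearing after conjugation.
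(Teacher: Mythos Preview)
Your argument is correct and follows the same strategy as the paper: a Neumann series in the corner algebra for part~(a), and for part~(b) conjugation by the unitary of Lemma~\ref{p-q manje 1} (pulled back via the faithful representation, since the unitary lies in $C^*(I,P,Q)$) to reduce to part~(a). The only noteworthy difference is organizational: in part~(b) the paper replaces the projections one at a time---first $q\to q'$ (obtaining a $(p,q')$-inverse with norm $\le\frac{4}{3}\|b\|$), then $p\to p'$---whereas you conjugate by both unitaries simultaneously via $\widetilde a=vau^*$. Your one-step version is slightly cleaner and gives the bound $\|b'\|\le 2\|b\|$ directly; the paper's two-step version has the mild advantage of isolating the intermediate $(p,q')$-inverse, but both are the same idea.
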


\begin{proof} a) Let $b$ be $(p,q)$-inverse of $a$. Then  $(1-q)(a+c)(1-p)=a'+c'=a'(1-p)+(1-q)c=a'(1-p)+a'bc'=a'(1-p+bc')$, where
$c'=(1-q)c(1-p).$ The element $bc'$ belongs to $(1-p)\mathcal A(1-p)$ and its norm is
$||bc'||\le||b||\,||c||<1$, for $||c||<||b||^{-1}$. Therefore, $1-p+bc'$ is invertible in the corner algebra
$(1-p)\mathcal A(1-p)$. Denote its inverse by $t$. We have $(1-q)(a+c)(1-p)tb=a'(1-p+bc')tb=a'b=1-q$ and
$tb(1-q)(a+c)(1-p)=tba'(1-p+bc')=t(1-p)(1-p+bc')=1-p$. Therefore $b_1=tb$ is $(p,q)$-inverse of $a+c$, for
$||c||<||b||^{-1}$.

Let us prove (\ref{ocena_inverza_perturbacije}). We have
$$||t||=||(1-p)+\sum_{n=1}^\infty(-1)^n(bc')^n||\le1+\sum_{n=1}^\infty||b||^n||c'||^n=\frac1{1-||b||\,||c'||}.$$
Since $||c'||\le||c||$ and $b_1=tb$, (\ref{ocena_inverza_perturbacije}) follows.

b) By Lemma \ref{p-q manje 1} there is a unitary $u$ such that $u^*q'u=q$. Then, we have
$$u^*(1-q')a(1-p)=(1-q)a(1-p)-u^*(1-q')(u-1)a(1-p)=(1-q)a(1-p)-c.$$
Note that $c\in(1-q)\mathcal A(1-p)$, as well as $||c||<||u-1||\,||a||<C||q-q'||\,||a||<1/(4||b||)$. By the
previous part a), we have that there is $(p,q)$-inverse of $u^*(1-q')a(1-p)$, say $b_1$. It is easy to check
that $b_1u^*$ is $(p,q')$-inverse of $a$, and also
$$||b_1u^*||\le||b_1||<\frac{||b||}{1-||b||\,||c||}\le\frac{4||b||}3.$$

In a similar way we can substitute $p$ with $p'$ to obtain that $a$ is invertible up to $(p',q')$. Denote its
$(p,q)$-inverse by $b'$. Also we have the inequality
$$||b'||\le\frac{||b_1||}{1-||b_1||\,||c'||}\le\frac{4||b||/3}{1-4||b||\,||c'||/3}\le 2||b||.$$
\end{proof}

The next Lemma will be used in the sequel many times. It allows us to transfer a projection from the right
side of an element to its left side, if the considered element is invertible on this projection. In the
framework of a representation, it means that we can carry a projection from the domain of an operator to its
codomain. The transferred projection is equivalent to the initial one.

\begin{lemma}\label{lema 2.4.}
Let $\mathcal A$ be a unital $C^*$-algebra, let $a\in\mathcal A$, let $p$, $r\in\mathcal A$ be projections
such that $a$ is left invertible up to $p$, i.e.\ there is $b\in\mathcal A$ such that $ba(1-p)=1-p$. Also,
let $r\le 1-p$. Then:

$(i)$ $ar$ has a polar decomposition in $\mathcal A$, i.e.
\begin{equation}\label{polarna}
ar=v|ar|,\quad v,|ar|\in\mathcal A,\quad r=vv^*.
\end{equation}
Further, $s=v^*v\in\mathcal A$ is the minimal projection in $\mathcal A$, such that $sar=ar$. (Obviously,
$s\sim r$, and if $r\in\mathcal F$, then $s\in\mathcal F$ as well.) Moreover, $a$ is invertible up to
$(1-r,1-s)$.

$(ii)$ If $\mathcal A\overset\rho\hookrightarrow B(H)$ is any faithful representation of $\mathcal A$, then $L=\rho(ar)(H)$ is a closed
subspace, and $\rho(s)$ is the projection on $L$. In particular projection on $L=\rho(ar)(H)$ belongs to
$\rho(\mathcal A)$.
\end{lemma}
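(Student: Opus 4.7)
The plan is to derive a polar decomposition of $ar$ inside $\mathcal{A}$ from the left-invertibility hypothesis, then read off the projection $s$ and the almost inverse by direct algebra, and finally transport everything to $B(H)$ via $\rho$ for part (ii).

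First I would use $r\le 1-p$ to rewrite $bar=ba(1-p)r=(1-p)r=r$. Forming $(bar)^*(bar)$ yields
$$r=ra^*b^*bar\le ||b||^2\, ra^*ar=||b||^2|ar|^2,$$
so inside the corner $C^*$-algebra $r\mathcal{A}r$ (whose unit is $r$) the positive element $|ar|^2$ is bounded below by $||b||^{-2}r$. Hence $|ar|$ is invertible in $r\mathcal{A}r$; call the inverse $h$. This step is the only real difficulty: upgrading the purely algebraic relation $ba(1-p)=1-p$ to a numerical spectral gap for $|ar|^2$ inside the corner $r\mathcal{A}r$. Once it is secured, the rest is routine manipulation with partial isometries.

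Next I would construct the partial isometry. Setting $v:=ar\cdot h\in\mathcal{A}$ gives $v|ar|=ar\cdot(h|ar|)=ar\cdot r=ar$, the desired polar factorization. The right support projection is $v^*v=h|ar|^2h=r$, so $v$ is a partial isometry with initial projection $r$; denote its range projection by $s:=vv^*\in\mathcal{A}$. (If $r\in\mathcal{F}$ then $rh\in\mathcal{F}$, so $v\in\mathcal{F}$, and $s\in\mathcal{F}$, by the ideal property.) Automatically $s\cdot ar=vv^*v|ar|=v|ar|=ar$. For minimality: if $s'$ is any projection with $s'ar=ar$, then $s'v=s'(ar)h=ar\cdot h=v$, hence $s'\cdot vv^*=vv^*$, i.e.\ $s'\ge s$. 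To see that $a$ is invertible up to $(1-r,1-s)$, take $b':=hv^*$; using $rh=h$, $v^*v=r$, $vv^*=s$, and $|ar|h=r$, the identities $b'\cdot sar=r$, $sar\cdot b'=s$, and $b'=rb's$ all follow by direct substitution.

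Part (ii) is then immediate: applying $\rho$ to the identity $ar=v|ar|$ gives a bona fide polar decomposition in $B(H)$. Because $|ar|$ is invertible in $r\mathcal{A}r$, $\rho(|ar|)$ bijects $\rho(r)H$ onto itself; and $\rho(v)$ is a partial isometry from $\rho(r)H$ onto $\rho(s)H$. Composing, $L=\rho(ar)H=\rho(s)H$, which is closed, and $\rho(s)$ is the orthogonal projection onto it.
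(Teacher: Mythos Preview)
Your proof is correct. Both you and the paper establish the same key fact---that $|ar|^2=ra^*ar$ is invertible in the corner $r\mathcal A r$---and then take $v=ar\cdot|ar|^{-1}$ (inverse in the corner). The difference is one of packaging: the paper first passes to a faithful representation $\rho:\mathcal A\to B(H)$, shows in $B(H)$ that $(AR)^*AR$ is an isomorphism of $K=\rho(r)H$ with inverse $T$, sets $V=AR\sqrt T$, and only afterwards verifies that $T$ (hence $V$) lies in $\rho(\mathcal A)$ by inverting $(AR)^*AR+I-R$ in $\mathcal A$; part~(ii) is thus established first and (i) read off from it. You instead extract the spectral lower bound $r\le\|b\|^2|ar|^2$ directly from $bar=r$ and invert inside the corner without ever leaving $\mathcal A$, so part~(ii) becomes an immediate corollary of~(i). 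Your route is shorter and more intrinsic; the paper's route makes the Hilbert-space geometry (closedness of $L$, the explicit description of $s$ as the range projection) visible from the outset. Note also that the displayed formula~(\ref{polarna}) in the statement has $vv^*$ and $v^*v$ interchanged relative to the standard polar-decomposition convention; your assignment $v^*v=r$, $s=vv^*$ is the correct one and agrees with what the paper's own proof actually produces.
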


\begin{proof}
Let $\mathcal A\overset\rho\hookrightarrow B(H)$ be some faithful representation of $C^*$-algebra $\mathcal
A$ and let $\rho(r)(H)=K$. Denote $A=\rho(a)$, $B=\rho(b)$, $P=\rho(p)$, $R=\rho(r)$. Obviously, $AR$ has a
polar decomposition $AR=V|AR|$ in $B(H)$. Since the partial isometry $v$ is uniquely determined by
(\ref{polarna}), we have only to prove that $V\in\rho(\mathcal A)$.

Let $H_1=(I-P)(H)$. For $\xi\in H_1$ we have
$$||\xi||=||BA(I-P)\xi||\le||B||\,||A(I-P)\xi||=k^{-1}||A\xi||,$$
where $k=||B||^{-1}$. (We used $(I-P)\xi=\xi$.) Thus,
$$||A\xi||\ge k||\xi||,\qquad\xi\in H_1,$$
i.e.\ $A$ is injective and bounded below on $H_1$, and consequently on $K=R(H)\le H_1$ ($r\le 1-p$).

Therefore, $L=AR(H)=A(K)$ is a closed subspace. (This proves the first claim in $(ii)$.) As $AR$ maps
bijectively $K$ to $L$, $(AR)^*$ maps bijectively $L$ to $K$ - Lemma \ref{adjointy is bounded below}, and
hence, $RA^*AR=(AR)^*AR$ is an isomorphism of $K$. Denote its inverse by $\hat T:K\to K$. Let $T:H\to H$ be
the extension of $\hat T$, defined to be $0$ on $K^\perp$. Trivially, $T\ge0$, and $\sqrt T$ exists. Also,
$\sqrt T$ maps $K$ to $K$ and $\sqrt T|_{K^\perp}=0$. We claim that $V=AR\sqrt T$. Indeed, both $V$ and
$AR\sqrt T$ annihilates $K^\perp$, whereas on $K$, $\sqrt T$ and $|AR|$ are inverses to each other, since
$|AR|=((AR)^*AR)^{1/2}$ and $T$ is inverse for $(AR)^*AR$. Thus, it remains to prove that $T\in\rho(\mathcal
A)$.

As it is easy to check, $(AR)^*AR+I-R$ and $T+I-R$ are inverses to each other. Since $(AR)^*AR+I-R\in\rho(\mathcal
A)$, its inverse also belongs to $\rho(\mathcal A)$. Namely, by Theorem 11.29 from \cite{Rudin} (or Theorem
2.1.11 from \cite{Marfi}) the spectrum of an element is the same with respect to algebra as well as with
respect to subalgebra. Apply this conclusion to $0$ as the point of spectrum to obtain $T+I-R\in\rho(\mathcal
A)$ and immediately, $T\in\rho(\mathcal A)$. Therefore $T=\rho(t)$ for some $t\in\mathcal A$. Hence
$V=\rho(v)$, where $v=ar\sqrt t$ and $ar$ has a polar decomposition in $\mathcal A$.

To finish the proof, note that $sar=ar$, since $s=v^*v$ and $S=\rho(s)$ is the projection on the subspace
$L=AR(H)$. If for some other projection $s_1$, there holds $s_1ar=ar$ then must be $S_1(H)\ge L$ and hence
$S_1\ge S$ implying $s_1\ge s$.
\end{proof}

The following Lemma (as many others), in fact, deals with $2\times2$ matrices. It can be freely reformulated
as follows: If $[a_{i,j}]_{i,j=1}^2$ is invertible, $a_{11}$ is invertible and if this matrix is either
triangular ($a_{12}=0$) or close to triangular ($a_{12}$ small) then $a_{22}$ is also invertible.

However, we do not use matrix notation (now and in the sequel) in order to avoid confusions such as with
respect to which pair(s) of projections a matrix is formed.

\begin{lemma}\label{razlaganje} Let $a$, $p$, $q\in\mathcal A$, let $p$, $q$ be projections such that $a$ is invertible up to
$(p,q)$ and let $r_1$, $r_2$, $s_1$, $s_2$ be projections such that $1-p=r_1+r_2$, $1-q=s_1+s_2$. If $a$ is
invertible up to $(1-r_1,1-s_1)$ and if $||s_2ar_1||<||b||^{-1}$ or $||s_1ar_2||<||b||^{-1}$, where $b$ is
$(p,q)$ inverse of $a$, then $a$ is invertible up to $(1-r_2,1-s_2)$.
\end{lemma}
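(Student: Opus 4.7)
The plan is to exploit the $2\times 2$ block structure of $a' = (1-q)a(1-p)$ induced by the orthogonal decompositions $1-p = r_1+r_2$ and $1-q = s_1+s_2$. Writing $A_{ij} = s_i a r_j$ and $B_{ij} = r_i b s_j$ for $i,j\in\{1,2\}$, the defining identities $a'b = 1-q$ and $ba' = 1-p$ unfold into the four block relations
$$
\sum_k A_{ik} B_{kj} = \delta_{ij}\, s_i, \qquad \sum_k B_{ik} A_{kj} = \delta_{ij}\, r_i.
$$

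First I would treat the triangular case $A_{12} = s_1 a r_2 = 0$. The hypothesis that $a$ is invertible up to $(1-r_1,1-s_1)$ provides a two-sided corner inverse $c\in r_1\mathcal A s_1$ of $A_{11}$. Multiplying the $(1,2)$ relation $A_{11}B_{12}+A_{12}B_{22}=0$ on the left by $c$ and using $A_{12}=0$ forces $B_{12}=0$. Substituting this into the $(2,2)$ row relation $A_{21}B_{12}+A_{22}B_{22}=s_2$ and the $(2,2)$ column relation $B_{21}A_{12}+B_{22}A_{22}=r_2$ shows that $B_{22}=r_2 b s_2$ is a two-sided inverse of $A_{22}=s_2 a r_2$, witnessing that $a$ is invertible up to $(1-r_2,1-s_2)$. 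The opposite triangular case $s_2 a r_1 = 0$ is identical after exchanging the roles of the two rows and columns.

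For the general case I would reduce to triangular by a small perturbation. Assume $\|s_1 a r_2\|<\|b\|^{-1}$ and set $\tilde a = a - s_1 a r_2$. The perturbing element has norm strictly less than $\|b\|^{-1}$, so Lemma \ref{mala pretumbacija}(a) ensures that $\tilde a$ is still invertible up to $(p,q)$. Since $s_i\tilde a r_j = s_i a r_j$ for $(i,j)\ne(1,2)$ and $s_1\tilde a r_2 = 0$, the element $\tilde a$ remains invertible up to $(1-r_1,1-s_1)$ and falls into the triangular case just handled. The resulting $(1-r_2,1-s_2)$-inverse of $\tilde a$ inverts $s_2\tilde a r_2 = s_2 a r_2$, so it also serves as a $(1-r_2,1-s_2)$-inverse of $a$. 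The alternative hypothesis $\|s_2 a r_1\|<\|b\|^{-1}$ is handled symmetrically via $\tilde a = a - s_2 a r_1$, reducing to the other triangular case.

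The main obstacle is bookkeeping in the triangular step: one has to carefully use the two-sided corner invertibility of $A_{11}$ to extract $B_{12}=0$ cleanly from the block identities, and then verify that $B_{22}$ is a genuine two-sided inverse of $A_{22}$ in the correct corners $r_2\mathcal A s_2$ and $s_2\mathcal A r_2$. Once this algebraic core is set, the passage from triangular to general via a small additive perturbation is a routine invocation of Lemma \ref{mala pretumbacija}(a).
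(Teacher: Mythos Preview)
Your proposal is correct and follows essentially the same route as the paper: reduce to a triangular case by subtracting the small off-diagonal block (via Lemma \ref{mala pretumbacija}a), then use the block relations from $a'b=1-q$ and $ba'=1-p$ together with the corner invertibility of $A_{11}$ to show $B_{22}$ is a two-sided $(1-r_2,1-s_2)$-inverse. The only cosmetic difference is that the paper takes $s_2ar_1=0$ as its primary triangular case (killing $B_{21}$ via the right inverse of $A_{11}$) and disposes of the other hypothesis by passing to $a^*$, whereas you take $s_1ar_2=0$ as primary (killing $B_{12}$ via the left inverse) and handle the other hypothesis by the symmetric perturbation; these are dual and equally valid.
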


\begin{proof} $1^\circ$ case - let $s_2ar_1=0$. Decompose $(1-q)a(1-p)$ and $b$ as
\begin{equation}\label{decompose a}
(1-q)a(1-p)=(s_1+s_2)a(r_1+r_2)=s_1ar_1+s_1ar_2+s_2ar_2.
\end{equation}
\begin{equation}\label{decompose b}
b=(1-p)b(1-q)=(r_1+r_2)b(s_1+s_2)=r_1bs_1+r_1bs_2+r_2bs_1+r_2bs_2.
\end{equation}

Multiplying (\ref{decompose a}) by (\ref{decompose b}) from right we get
$$s_1+s_2=1-q=s_1ar_1bs_1+s_1ar_1bs_2+s_1ar_2bs_1+s_1ar_2bs_2+s_2ar_2bs_1+s_2ar_2bs_2.$$
Multiplying the last equality by $s_2$ from both, left and right side we obtain:
\begin{equation}\label{desni inverz}
s_2=s_2ar_2r_2bs_2.
\end{equation}

Multiplying (\ref{decompose a}) and (\ref{decompose b}) in reverse order we get
\begin{equation}\label{pomnozeno naopako}
r_1+r_2=1-p=r_1bs_1ar_1+r_1bs_1ar_2+r_1bs_2ar_2+r_2bs_1ar_1+r_2bs_1ar_2+r_2bs_2ar_2.
\end{equation}
Multiply (\ref{pomnozeno naopako}) by $r_2$ from the left, and by $r_1$ from the right to obtain
$r_2bs_1ar_1=0$. The element $a$ has an $(1-r_1,1-s_1)$ inverse, say $b'$. It holds $s_1ar_1b'=s_1$, and we
have $r_2bs_1=r_2bs_1ar_1b'=0b'=0$.

Finally, multiply (\ref{pomnozeno naopako}) by $r_2$ from both, left and right side to obtain
$$r_2=r_2bs_1ar_2+r_2bs_2ar_2=0ar_2+r_2bs_2ar_2=r_2bs_2s_2ar_2.$$
This together with (\ref{desni inverz}) means that $r_2bs_2$ is $(1-r_2,1-s_2)$-inverse for $a$. Therefore,
by Lemma \ref{lema 2.4.} applying to  $r_2\leq r_1+r_2$ we found that $r_2\sim s_2$.

$2^\circ$ case - general. Consider $\hat a=a-s_2ar_1$. Obviously, $s_1\hat a r_1=s_1ar_1$, $s_2\hat a r_2=s_2
ar_2$ and $s_2\hat ar_1=0$. Since $||s_2ar_1||<||b||^{-1}$, by Lemma \ref{mala pretumbacija} a), $\hat a$ is
invertible up to $(p,q)$ and we can apply the previous case.

If $||s_1ar_2||<||b||^{-1}$, apply the presented proof to $a^*$.
\end{proof}

The next Lemma is a refinement of Lemma \ref{lema 2.4.}.

\begin{lemma}\label{treci deo leme 2.4}
Let $\mathcal A$ be a unital $C^*$-algebra, let $p$, $q$, $r\in\mathcal A$ be projections such that $a$ is
invertible up to $(p,q)$, and $r\le 1-p$, and let $s$ be the projection obtained in Lemma \ref{lema 2.4.}. If
$qa(1-p)=0$, then $s\le1-q$, and $1-q-s\sim1-p-r$.
\end{lemma}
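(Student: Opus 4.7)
The plan is to first show the projection inequality $s\le 1-q$, and then obtain the Murray--von Neumann equivalence $1-q-s\sim 1-p-r$ by applying Lemma \ref{razlaganje} followed by a second application of Lemma \ref{lema 2.4.}. Throughout set $r'=1-p-r$ and $s'=1-q-s$.

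For the first claim, since $r\le 1-p$ we have $qar=qa(1-p)r=0$. By Lemma \ref{lema 2.4.}, $s$ is the minimal projection with $sar=ar$; since $(1-q)ar=ar-qar=ar$, minimality forces $s\le 1-q$, and hence $s'$ is a genuine projection.

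Next I would apply Lemma \ref{razlaganje} with $r_1=r$, $r_2=r'$, $s_1=s$, $s_2=s'$. The decompositions $r_1+r_2=1-p$ and $s_1+s_2=1-q$ hold, $a$ is invertible up to $(1-r_1,1-s_1)=(1-r,1-s)$ by Lemma \ref{lema 2.4.}, and the smallness hypothesis is automatic because
\begin{equation*}
s_2ar_1=s'ar=s'(sar)=(s's)ar=0,
\end{equation*}
using $sar=ar$ and $s's=0$. The conclusion is that $a$ is invertible up to $(1-r_2,1-s_2)=(p+r,q+s)$, so there is some $b''\in r'\mathcal{A}s'$ with $s'ar'\cdot b''=s'$ and $b''\cdot s'ar'=r'$.

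Finally I would apply Lemma \ref{lema 2.4.} once more, this time with the element $s'a$ in place of $a$ and the projections $p+r$, $r'$ in place of $p$, $r$. Left-invertibility of $s'a$ up to $p+r$ is witnessed by $b''$, since $b''\cdot(s'a)\cdot r'=r'=1-(p+r)$, and $r'\le 1-(p+r)$ trivially. Hence $(s'a)\cdot r'=s'ar'$ admits a polar decomposition inside $\mathcal{A}$ and $r'\sim s''$, where $s''$ is the minimal projection with $s''\cdot s'ar'=s'ar'$. Since $s'$ itself satisfies this relation, $s''\le s'$; conversely
\begin{equation*}
s''\cdot s'=s''\cdot(s'ar'\cdot b'')=(s''\cdot s'ar')\cdot b''=s'ar'\cdot b''=s',
\end{equation*}
so $s'\le s''$. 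Therefore $s''=s'$ and $r'\sim s'$, which is the desired equivalence $1-p-r\sim 1-q-s$.

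The main obstacle I anticipate is purely bookkeeping: verifying that the two invocations of Lemma \ref{lema 2.4.} apply with the intended substitutions, and in particular that the minimal range projection produced by the second invocation coincides exactly with $s'$ rather than something strictly smaller. Once these matchings are confirmed, the argument reduces to the algebraic chain above.
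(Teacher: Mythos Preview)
Your argument is correct and follows essentially the same route as the paper: deduce $s\le 1-q$ from minimality, compute $s'ar=0$, and apply Lemma~\ref{razlaganje} to obtain invertibility of $a$ up to $(p+r,q+s)$, from which the equivalence $1-p-r\sim 1-q-s$ follows. The only cosmetic difference is that the paper extracts this last equivalence directly from the corner invertibility of $s_2ar_1$ (as already noted at the end of the proof of Lemma~\ref{razlaganje}), whereas you spell it out via a second application of Lemma~\ref{lema 2.4.} to $s'a$ together with the check $s''=s'$; both are valid.
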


\begin{proof}
Since $a$ is invertible up to $(p,q)$, $a$ is left invertible up to $p$. So, the projection $s$ is well
defined.

If $qa(1-p)=0$, then $(1-q)a(1-p)=a(1-p)$ which implies $ar=a(1-p)r=(1-q)a(1-p)r=(1-q)ar$. So, $s\le1-q$ by
minimality of $s$.

Denote $r_1=1-p-r$. Apply the previous part of the proof to $r_1\le1-p$ to obtain the minimal $s_1\le1-q$
such that $s_1ar_1=ar_1$ and $r_1\sim s_1$. Denote $s_2=1-q-s$. Projections $s_1$ and $s_2$ might not
coincide, but they must be equivalent. More precisely $s_2\sim r_1\sim s_1$. To show this it is enough to
prove that $s_2ar_1$ is "invertible". However, from $sar=ar$ we get
$s_2ar=(1-q)ar-sar=(1-q)a(1-p)r-sar=a(1-p)r-qa(1-p)r-sar=ar-0-sar=0$. Thus, it is sufficient to apply Lemma
\ref{razlaganje}.
\end{proof}

The last statement in this subsection ensures that almost invertible elements can be triangularized. More
precisely, if $a$ is invertible up to $(p,q)$ then we can substitute $p$ or $q$ (one of them - not
simultaneously) by an equivalent projection such that almost invertibility is not harmed and such that with
respect to new projections the considered element has a triangular form.

\begin{proposition}\label{trijagularizacija}
Let $\mathcal A$ be a unital $C^*$-algebra, let $p$, $q\in\mathcal A$ and let $a$ be invertible up to
$(p,q)$.

a) Then there is a projection $q'\in\mathcal A$, $q'\sim q$ such that $a$ is invertible up to $(p,q')$ and
$q'a(1-p)=0$;

b) Also, there is another projection $p'\in\mathcal A$, $p'\sim p$ such that $a$ is invertible up to $(p',q)$
and $(1-q)ap'=0$.
\end{proposition}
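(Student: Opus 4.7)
The plan is to first use Lemma \ref{lema 2.4.} to obtain a natural candidate projection $q'$, and then to produce an explicit unitary in $\mathcal{A}$ that certifies its Murray--von Neumann equivalence with $q$.

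For part a), I would apply Lemma \ref{lema 2.4.} with $r := 1-p$ (trivially $r \le 1-p$); a $(p,q)$-inverse $b$ of $a$ furnishes the required left inverse via $ba(1-p) = (1-p)b(1-q)a(1-p) = (1-p)ba' = 1-p$, where $a' := (1-q)a(1-p)$. This yields a projection $s \in \mathcal{A}$ with $s \sim 1-p$, satisfying $sa(1-p) = a(1-p)$, and tells us that $a$ is invertible up to $(p, 1-s)$. Setting $q' := 1-s$ therefore gives $q' \in \mathcal{A}$, $q'a(1-p) = (1-s)a(1-p) = 0$, and $a$ invertible up to $(p, q')$; the only nontrivial task that remains is $q' \sim q$.

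To prove that last equivalence I would introduce $T := qa(1-p)b \in q\mathcal{A}(1-q)$. A short calculation using $ba(1-p) = 1-p$ gives $Ta(1-p) = qa(1-p)$, hence $a(1-p) = (1+T)a'$; since $a'$ maps $\ran(1-p)$ bijectively onto $\ran(1-q)$, the range of $a(1-p)$ equals the ``graph'' $(1+T)(\ran(1-q)) = \{\eta + T\eta : \eta \in \ran(1-q)\}$, which is $\ran(s)$. Because $T \in q\mathcal{A}(1-q)$ one has $T^2 = 0 = (T^*)^2$, so $V := 1+T-T^*$ is normal with $V^*V = VV^* = 1 + T^*T + TT^*$; the latter is bounded below by $1$, hence invertible, and
\[
U := V(V^*V)^{-1/2} = (1 + T - T^*)(1 + T^*T + TT^*)^{-1/2}
\]
is a unitary in $\mathcal{A}$. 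A block computation with respect to $1 = (1-q) + q$ shows that $U$ carries $\ran(1-q)$ isometrically onto the graph of $T$, i.e.\ $U(1-q)U^* = s$, so $UqU^* = 1-s = q'$ witnesses $q \sim q'$.

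The main obstacle is the verification that the block computation really gives $U(1-q)U^* = s$; the key ingredients are the intertwining $f(T^*T)T^* = T^* f(TT^*)$ (for continuous $f$) together with the fact that $T^*T$ and $TT^*$ live in the corners $(1-q)\mathcal{A}(1-q)$ and $q\mathcal{A}q$, so that functional calculus in $\mathcal{A}$ restricts correctly to these corners. For part b) I would apply part a) to $a^*$, which is invertible up to $(q,p)$: this produces a projection $p' \sim p$ with $p'a^*(1-q) = 0$ and $a^*$ invertible up to $(q,p')$, and taking adjoints delivers $(1-q)ap' = 0$ together with invertibility of $a$ up to $(p',q)$.
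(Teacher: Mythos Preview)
Your argument is correct and constitutes a genuinely different route from the paper's.  You apply Lemma~\ref{lema 2.4.} to $a$ with $r=1-p$, which immediately yields $q'=1-s$ satisfying $q'a(1-p)=0$ and invertibility of $a$ up to $(p,q')$; the remaining work is the equivalence $q'\sim q$, which you establish by exhibiting the explicit unitary $U=(1+T-T^*)(1+T^*T+TT^*)^{-1/2}$ built from $T=qa(1-p)b$.  The paper instead starts from the invertible element $u=1+qab$ (note that $qab=T$ since $b=(1-p)b$, so the same off-diagonal piece drives both proofs), applies Lemma~\ref{lema 2.4.} to $u^{-1*}$ and the projection $q$ to obtain $q'\sim q$ for free, and then spends its effort on an algebraic verification that $q'a(1-p)=0$ and that $bu^{-1}(1-q')$ is a $(p,q')$-inverse of $a$.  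In fact the two constructions produce the \emph{same} $q'$: the range of $(1+T)(1-q)$ and the range of $u^{-1*}q=(1-T^*)q$ are orthogonal complements, as one checks directly using $T^2=0$.  Your approach trades the paper's algebraic manipulations for the (clean) construction of a graph unitary; as a small bonus you obtain a unitary implementing $q\sim q'$, whereas the paper only extracts a partial isometry from Lemma~\ref{lema 2.4.}.  The block verification $U(1-q)U^*=s$ is most transparently seen in a faithful representation: $U(1-q)=(1+T)D_1$ with $D_1=((1-q)+T^*T)^{-1/2}$ invertible on $(1-q)H$, so $U((1-q)H)=(1+T)((1-q)H)$, which is exactly the range of $a(1-p)=(1+T)a'$ and hence of $s$ by Lemma~\ref{lema 2.4.}(ii).
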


\begin{proof} a) Let $b$ be the $(p,q)$-inverse of $a$. Then
\begin{equation}\label{svojstva b-a}
\begin{gathered}
(1-q)a(1-p)b=1-q,\quad b(1-q)a(1-p)=1-p,\\(1-p)b(1-q)=b,\quad pb=bq=0,\quad b=(1-p)b,\quad b=b(1-q).
\end{gathered}
\end{equation}
Let $u=1+qab$. This element has its inverse, $u^{-1}=1-qab$, which can be easily checked.

Consider the element $a_1=u^{-1}a=(1-qab)a=a-qaba$. Using $b(1-q)=b$ and $b(1-q)a(1-p)=1-p$ (see
(\ref{svojstva b-a})), we have
\begin{equation}\label{qa1(1-p)}
qa_1(1-p)=q(a-qaba)(1-p)=qa(1-p)-qab(1-q)a(1-p)=qa(1-p)-qa(1-p)=0.
\end{equation}

The element $u^{-1*}$ is invertible together with $u$ and we can apply Lemma \ref{lema 2.4.} to obtain a
minimal projection $q'\in\mathcal A$ such that $q'\sim q$, $u^{-1*}$ is invertible up to $(1-q,1-q')$ and
$q'u^{-1*}q=u^{-1*}q$. The last equality implies
\begin{equation}\label{definicijaq'}
qu^{-1}=qu^{-1}q',\qquad qu^{-1}(1-q')=0,
\end{equation}
which ensures, by Lemma \ref{razlaganje} that $u^{-1}$ is invertible up to $(1-q',1-q)$, as well as up to
$(q',q)$.

Let us, first prove that $q'a(1-p)=0$. Indeed, by (\ref{qa1(1-p)}) and (\ref{definicijaq'}) we have
\begin{equation}\label{jedn1}
0=qa_1(1-p)=qu^{-1}a(1-p)=qu^{-1}q'a(1-p).
\end{equation}
Let $t$ is $(1-q',1-q)$-inverse for $u^{-1}$. We have $tqu^{-1}q'=q'$.
Multiply the  equation \ref{jedn1} from the left by $t$ to obtain
$0=tqu^{-1}q'a(1-p)=q'a(1-p)$.

Finally, let us prove that $a$ is invertible up to $(p,q')$. Note that
$(1-q)a_1(1-p)=(1-q)(a-qaba)(1-p)=(1-q)a(1-p)$. Next, by (\ref{definicijaq'}) we get $qu^{-1}(1-q')=0$ and
$(1-q)u^{-1}(1-q')=u^{-1}(1-q')$ and hence
\begin{eqnarray*}
(1-q')a(1-p)&=&a(1-p)=uu^{-1}a(1-p)=ua_1(1-p)=u(1-q)a_1(1-p)\\
&=&u(1-q)a(1-p).
\end{eqnarray*}
Now, it is easy to check that $bu^{-1}(1-q')$ is $(p,q')$-inverse of $a$. Indeed
\begin{multline*}(1-q')a(1-p)\cdot bu^{-1}(1-q')=u(1-q)a(1-p)bu^{-1}(1-q')=\\
    =u(1-q)u^{-1}(1-q')=uu^{-1}(1-q')=1-q'
\end{multline*}
and
$$bu^{-1}(1-q')\cdot(1-q')a(1-p)=bu^{-1}u(1-q)a(1-p)=1-p.$$

b) It is enough to apply the previous conclusion to $a^*$.
\end{proof}

\subsection*{Approximate units technique}

For the development of the abstract Fredholm theory, we required in Definition \ref{definicija konacnih} an
ideal with an approximate unit consisting of projections. In this subsection we derive some properties of
such an ideal.

In the next two Lemmata we obtain that an approximate unit absorbs any other projections and that any finite
projection is an element of some approximate unit.

\begin{lemma}\label{ekvivalentan nekoj jedinici}Let $p_\a\in\mathcal F$ be an approximate unit, and let
$p\in\mathcal F$. Then there is some $\a_0$ and $p'$ such that $p\sim p'\le p_{\a_0}$. Moreover, $\a_0$ can
be chosen such that $||p-p'||$ is arbitrarily small.
\end{lemma}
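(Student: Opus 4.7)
The plan is to compress $p$ into the corner $p_{\a_0}\mathcal A p_{\a_0}$ for a sufficiently large approximate unit index $\a_0$, obtaining an element close to (but not quite equal to) a projection, and then round that compression to a nearby genuine projection $p'\le p_{\a_0}$ by continuous functional calculus. The norm bound $\|p-p'\|<1$ will then let Lemma~\ref{p-q manje 1} supply the Murray--von Neumann equivalence $p\sim p'$.

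Concretely, fix a small $\e$ (ultimately $\e<1/2$ is enough). Since $p\in\mathcal F$ and $p_\a$ is an approximate unit for $\mathcal F$, choose $\a_0$ with $\|p_{\a_0}p-p\|$ and $\|pp_{\a_0}-p\|$ both less than $\e/2$. Put $e:=p_{\a_0}pp_{\a_0}$. A two-step triangle estimate yields $\|e-p\|<\e$, and $e=(pp_{\a_0})^*(pp_{\a_0})\ge 0$. Since $\sigma(p)\subseteq\{0,1\}$, self-adjoint spectral perturbation forces $\sigma(e)\subseteq[0,\e)\cup(1-\e,1]$.

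Define $p':=\chi(e)$ via continuous functional calculus in $\mathcal A$, where $\chi$ is any continuous function on $[0,1]$ vanishing on $[0,\e]$, equal to $1$ on $[1-\e,1]$, with $\chi(0)=0$. Then $\chi^2=\chi$ on $\sigma(e)$ and $\chi$ is real, so $p'$ is a projection in $\mathcal A$. The main step is to show $p'\le p_{\a_0}$, and this is where the condition $\chi(0)=0$ is essential. By Weierstrass, $\chi$ can be uniformly approximated on $\sigma(e)\cup\{0\}$ by polynomials without constant term; the identity $e=p_{\a_0}ep_{\a_0}$ together with $p_{\a_0}^2=p_{\a_0}$ gives $e^k=p_{\a_0}e^kp_{\a_0}$ by induction on $k\ge 1$, so every such polynomial in $e$ lies in $p_{\a_0}\mathcal A p_{\a_0}$. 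Passing to the norm limit, $p'=p_{\a_0}p'p_{\a_0}$, which for a projection is equivalent to $p'\le p_{\a_0}$; in particular $p'\in\mathcal F$ by the ideal property.

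Finally, $|\chi(t)-t|\le\e$ on $\sigma(e)$, so $\|p'-e\|\le\e$ and $\|p-p'\|\le 2\e<1$. Lemma~\ref{p-q manje 1} then produces a partial isometry $v\in C^*(1,p,p')\subseteq\mathcal A$ with $vv^*=p$, $v^*v=p'$, i.e.\ $p\sim p'$ in $\Proj(\mathcal F)$. Since $\e$ was arbitrary and $\|p-p'\|\le 2\e$, the ``moreover'' clause is immediate. The main conceptual obstacle is the domination $p'\le p_{\a_0}$: rather than try to perturb $p_{\a_0}$ directly into a projection equivalent to $p$, we work inside the corner and exploit that functional calculus with a function vanishing at $0$ preserves that corner.
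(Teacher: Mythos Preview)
Your argument is correct and complete. It differs genuinely from the paper's proof, however, so a brief comparison is worthwhile.

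The paper constructs $p'$ by invoking Lemma~\ref{lema 2.4.}: from $\|p-p_{\a_0}p\|<1$ one deduces that $pp_{\a_0}p$ is invertible in $p\mathcal A p$, hence $p_{\a_0}$ is left invertible up to $1-p$, and Lemma~\ref{lema 2.4.} (working through a faithful representation and a polar decomposition) produces $p'$ as the range projection of $p_{\a_0}p$, with the equivalence $p\sim p'$ built in. The estimate $\|p-p'\|$ is then obtained separately via the ``almost orthogonal'' Lemma~\ref{norma od PQ} and several inner-product computations in the Hilbert space. By contrast, you stay entirely inside the abstract $C^*$-algebra: you form $e=p_{\a_0}pp_{\a_0}$, observe its spectral gap, and let $p'=\chi(e)$ for a continuous $\chi$ vanishing at $0$; the domination $p'\le p_{\a_0}$ follows from the corner-preserving property of functional calculus with $\chi(0)=0$, the norm bound $\|p-p'\|\le 2\e$ is immediate, and the equivalence comes a posteriori from Lemma~\ref{p-q manje 1}. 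Your route is shorter and more intrinsic, avoiding representations, Lemma~\ref{norma od PQ}, and Lemma~\ref{lema 2.4.} altogether; the paper's route, on the other hand, identifies $p'$ concretely as a range projection and exercises the polar-decomposition machinery that is used repeatedly elsewhere in the paper.
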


\begin{proof} Since $p_\a$ is an approximate unit, we have $||p-p_\a p||\to0$, as $\a\to\infty$.
Therefore, there is $\a_0$ such that
\begin{equation}\label{manje od polovine}
||p-p_{\a_0}p||\le\delta<1,\qquad\mbox{implying}\qquad||p-pp_{\a_0}p||\le\delta.
\end{equation}

First, we obtain that $pp_{\a_0}p$ is invertible element of the corner algebra $p\mathcal A p$.

Now, pick a faithful representation $\rho$ of $\mathcal A$ on some Hilbert space $H$. Denote the images of
$\rho$ by the corresponding capital letters, $P=\rho(p)$, $P_{\a_0}=\rho(p_{\a_0})$ etc.

Let $K=P(H)$. Since $p_{\a_0}$ is invertible up to $(1-p,1-p)$ we can apply Lemma \ref{lema 2.4.} $(i)$ to
conclude that there is $p'\in\mathcal A$, $p'\sim p$. By the part $(ii)$ of the same Lemma, we get $p'\le
p_{\a_0}$ ($P'$ is the projection on the range of $P_{\a_0}P$. The last is, obviously, subspace of the range
of $P_{\a_0}$.)

It remains to prove that $||p-p'||$ can be arbitrarily small. To do this, let us prove that $p_{\a_0}$ does
not change norm of $\xi\in K$ too much. Indeed, for $\xi\in K$, using (\ref{manje od polovine}) we have
$$||p_{\a_0}\xi||=||p_{\a_0}p\xi||=||p\xi-(p-p_{\a_0}p)\xi||\le(1+\delta)||\xi||,$$
and also,
\begin{equation}\label{vece od 1-delta}
||p_{\a_0}\xi||=||p_{\a_0}p\xi||=||p\xi-(p-p_{\a_0}p)\xi||\ge||\xi||-||(p-p_{\a_0}p)\xi||\ge(1-\delta)||\xi||.
\end{equation}

Now, we want to prove that $1-p'$ and $p$ (and similarly $1-p$ and $p'$) are "almost orthogonal".

Let $\eta=(1-p)\eta$ and $\zeta=p'\zeta$. Then $\zeta=p_{\a_0}\xi$ for some $\xi=p\xi\in K$, and
$||\zeta||\ge(1-\delta)||\xi||$ by (\ref{vece od 1-delta}). We have
$$\skp\zeta\eta=\skp{p_{\a_0}p\xi}{(1-p)\eta}=-\skp{(p-p_{\a_0}p)\xi}{(1-p)\eta},$$
and hence
$$|\skp{\zeta}\eta|\le||p-p_{\a_0}p||\,||\xi||\,||\eta||\le\frac\delta{1-\delta}||\zeta||,||\eta||.$$
Therefore, by Lemma \ref{norma od PQ}
\begin{equation}\label{ocena p'-p'p}
||p'(1-p)||<\frac\delta{1-\delta}.
\end{equation}

Now, let $\zeta=(1-p')\zeta$ and $\eta=p\eta$. Then, $\zeta\perp p_{\a_0}p\eta$ (since $p_{\a_0}p\eta\in
p'H$) and therefore
\begin{multline*}
|\skp\zeta\eta|=|\skp{(1-p')\zeta}{p\eta}|=|\skp{(1-p')\zeta}{(p-p_{\a_0}p)\eta}|\le\\
    \le||p-p_{\a_0}p||\,||\zeta||\,||\eta||\le\delta||\zeta||\,||\eta||,
\end{multline*}
from which we conclude again by Lemma \ref{norma od PQ}
\begin{equation}\label{ocena p-pp'}
||p-p'p||=||(p-p'p)^*||=||p-pp'||\le\delta.
\end{equation}

From (\ref{ocena p'-p'p}) and (\ref{ocena p-pp'}) we get
$$||p-p'||\le||p-pp'||+||p'-pp'||\le\delta\frac{2-\delta}{1-\delta},$$
which can be arbitrarily small.
\end{proof}

\begin{lemma}\label{utapanje u jedinicu} Let $\mathcal F$  be an algebra of finite type elements.
For every projection $p\in\mathcal F$ there is an approximate unit $p_\a$ in $\mathcal F$ such that for
all $\a$ there holds $p\le p_\a$.
\end{lemma}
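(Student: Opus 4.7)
The plan is to start with a given approximate unit $\{q_\alpha\}$ of projections in $\mathcal F$ (supplied by axiom $(ii)$) and deform each $q_\alpha$ by a small unitary, so that the result dominates $p$ without spoiling the approximate-unit property.

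First, since $p\in\mathcal F$ and $\{q_\alpha\}$ is an approximate unit, $||p-q_\alpha p||\to0$. For $\alpha$ eventually large, Lemma \ref{ekvivalentan nekoj jedinici} produces a projection $p'_\alpha\in\mathcal F$ with $p'_\alpha\sim p$, $p'_\alpha\le q_\alpha$, and $||p-p'_\alpha||$ arbitrarily small. Once $||p-p'_\alpha||<1/2$, Lemma \ref{p-q manje 1} supplies a unitary $u_\alpha\in C^*(1,p,p'_\alpha)\subseteq\mathcal A$ with $u_\alpha^*p'_\alpha u_\alpha=p$ and, crucially, $||1-u_\alpha||\le C||p-p'_\alpha||\to0$.

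I would then define $p_\alpha:=u_\alpha^*q_\alpha u_\alpha$ on the cofinal index set where the construction above is valid (a subnet of an approximate unit is again an approximate unit). Being a unitary conjugate of a projection, $p_\alpha$ is itself a projection; it lies in $\mathcal F$ because $\mathcal F$ is an ideal (axiom $(i)$) containing $q_\alpha$; and the relation $p'_\alpha\le q_\alpha$ yields $p=u_\alpha^*p'_\alpha u_\alpha\le u_\alpha^*q_\alpha u_\alpha=p_\alpha$, as required. It remains to verify that $\{p_\alpha\}$ is still an approximate unit for $\mathcal F$.

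This last verification is the main obstacle, and is exactly the reason I insist on the quantitative bound (\ref{ocena_unitarnog}) from Lemma \ref{p-q manje 1}. For $a\in\mathcal F$, a straightforward triangle-inequality decomposition of $u_\alpha^*q_\alpha u_\alpha a-a$ into $u_\alpha^*q_\alpha(u_\alpha-1)a+u_\alpha^*(q_\alpha a-a)+(u_\alpha^*-1)a$ produces
$$||p_\alpha a-a||\le 2||1-u_\alpha||\,||a||+||q_\alpha a-a||,$$
which reduces convergence to the already established facts $||1-u_\alpha||\to0$ and $||q_\alpha a-a||\to0$. The right-sided estimate $||ap_\alpha-a||\to0$ is symmetric, so $\{p_\alpha\}$ is the desired approximate unit of projections in $\mathcal F$ with $p\le p_\alpha$ for all $\alpha$.
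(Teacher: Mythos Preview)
Your argument is correct, but it takes a more laborious route than the paper's. The paper simply fixes \emph{one} index $\alpha_0$ (large enough that Lemma \ref{ekvivalentan nekoj jedinici} yields $p'\le q_{\alpha_0}$ with $\|p-p'\|<1$), applies Lemma \ref{p-q manje 1} once to obtain a \emph{single} unitary $u$ with $u^*p'u=p$, and then conjugates the entire net by that fixed $u$: the family $\{u^*q_\alpha u\}$ is automatically an approximate unit (conjugation by a fixed unitary is an isometric $*$-automorphism), and $p=u^*p'u\le u^*q_\alpha u$ for all $\alpha\ge\alpha_0$. No estimate on $\|1-u\|$ is needed at all.

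By contrast, you produce a \emph{different} unitary $u_\alpha$ for each index and are therefore forced to invoke the quantitative bound (\ref{ocena_unitarnog}) together with the triangle-inequality computation to recover the approximate-unit property. This works, and it even has a mild advantage: your construction does not rely on the net $\{q_\alpha\}$ being increasing, whereas the one-unitary argument needs $p'\le q_{\alpha_0}\le q_\alpha$ for $\alpha\ge\alpha_0$. The trade-off is that the paper's proof is a two-line observation, while yours spends effort establishing convergence that a fixed conjugation gives for free.
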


\begin{proof}
Let $p\in\mathcal F$ and let $p_\a$ be an approximate unit. By Lemma \ref{ekvivalentan nekoj jedinici}, for
$\a$ large enough, we have $p\sim p'\le p_\a$ and $||p-p'||<1$. By Lemma \ref{p-q manje 1} there is a unitary
$u$ such that $p'=u^*pu$. Then $p'_\a=u^*p_\a u$ is an approximate unit that contains $p$.
\end{proof}

The following Proposition plays the key role in this paper. It ensures that we can transfer an approximate
unit from the right side of an almost invertible element to its left side, retaining some triangular
properties. Briefly, if such almost invertible element is upper triangular, with upper left entry invertible,
then this entry itself has a triangular form (but lower - not upper) with respect to an approximate unit from
the right and its corresponding approximate unit from the left.

\begin{proposition}\label{desna apr jed}
Let $p$, $q\in\mathcal F$, let $a$ be invertible up to $(p,q)$, and let $qa(1-p)=0$. Further, let $p_\a\ge p$
be an approximate unit for $\mathcal F$. Then there exists an approximate unit $q_\a\in\mathcal F$, such that
$q_\a-q\sim p_\a-p$, $a$ is invertible up to $(p_\a,q_\a)$, and
\begin{equation}\label{prelaz sa p_a na q_a v1}
(1-q_\a)a(p_\a-p)=0,
\end{equation}
for all $\a$.
\end{proposition}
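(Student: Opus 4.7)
My plan is to construct $q_\a$ by enlarging $q$ with the range projection of $a(p_\a - p)$. For each $\a$, since $p_\a - p \le 1-p$ and $a$ is left invertible up to $p$, Lemma \ref{lema 2.4.} yields a minimal projection $s_\a \in \mathcal F$ with $s_\a a(p_\a - p) = a(p_\a - p)$ and $s_\a \sim p_\a - p$. The extra hypothesis $qa(1-p)=0$ then forces $s_\a \le 1-q$ via Lemma \ref{treci deo leme 2.4}, so $q_\a := q + s_\a$ is a well-defined projection in $\mathcal F$ with $q_\a - q = s_\a \sim p_\a - p$, which already handles the equivalence statement.

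The triangular identity (\ref{prelaz sa p_a na q_a v1}) drops out at once: from $qa(p_\a-p) = qa(1-p)(p_\a-p) = 0$ we get $(1-q)a(p_\a - p) = a(p_\a - p) = s_\a a(p_\a - p)$, i.e., $(1-q_\a)a(p_\a - p) = 0$. For almost invertibility of $a$ up to $(p_\a, q_\a)$, I would apply Lemma \ref{razlaganje} with $r_1 = p_\a - p$, $r_2 = 1 - p_\a$, $s_1 = s_\a$, $s_2 = 1 - q_\a$: the required hypothesis that $a$ is invertible up to $(1-r_1, 1-s_1)$ is the last conclusion of Lemma \ref{lema 2.4.}, and the smallness condition is trivial because $s_2 a r_1 = 0$ from the previous step.

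The substantive point is showing that $\{q_\a\}$ is genuinely an approximate unit. Monotonicity is easy: $p_\a \le p_\beta$ gives $(p_\beta-p)(p_\a-p) = p_\a-p$, hence $a(p_\a-p) = s_\beta a(p_\beta-p)(p_\a-p) = s_\beta a(p_\a-p)$, so $s_\a \le s_\beta$ by minimality of $s_\a$. For convergence $q_\a x \to x$ on $x \in \mathcal F$, I would exploit that $\mathcal F$ is an ideal, so that $bx \in \mathcal F$ where $b$ is the $(p,q)$-inverse. Then $p_\a (bx) \to bx$, and $pb = 0$ upgrades this to $(p_\a - p)bx \to bx$. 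Applying $a$ and using both $qa(1-p)=0$ and $(1-q)a(1-p)b = 1-q$ yields $y_\a := a(p_\a - p)bx \to (1-q)x$. Since $s_\a y_\a = y_\a$ and $||s_\a|| \le 1$, we get $s_\a(1-q)x \to (1-q)x$; combined with $s_\a q = 0$, this gives $q_\a x = qx + s_\a x \to x$. The right-sided condition $x q_\a \to x$ then follows by applying this to $x^* \in \mathcal F$ and taking adjoints.

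The main obstacle is the last convergence argument: one cannot expect $s_\a$ to converge to a definite projection in norm, so the argument must be set up pointwise on elements of $\mathcal F$, and the interplay of the ideal property, the boundary conditions $pb=bq=0$, and the relation $(1-q)a(1-p)b=1-q$ all have to fit together to push the convergence of $p_\a$ through the operator $a$ into convergence on the $(1-q)$-side.
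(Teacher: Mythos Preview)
Your proof is correct and follows essentially the same route as the paper: construct $q_\a$ as $q$ plus the range projection of $a(p_\a-p)$ via Lemma~\ref{lema 2.4.}, use Lemma~\ref{treci deo leme 2.4} for $s_\a\le 1-q$, invoke Lemma~\ref{razlaganje} for invertibility up to $(p_\a,q_\a)$, and then check monotonicity and the approximate-unit property. Your convergence argument $a(p_\a-p)bx\to(1-q)x$ is just a rearrangement of the paper's computation $(1-q_\a)f=(1-q_\a)a'bf\to 0$; both hinge on the identities $pb=0$, $qa(1-p)=0$, and $(1-q)a(1-p)b=1-q$.
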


\begin{proof}a) Since $a'=(1-q)a(1-p)$ is invertible, and since $p_\a-p\le 1-p$, there is (by Lemma \ref{lema
2.4.}) a minimal projection $q'_\a$ such that $q'_\a a(p_\a-p)=a(p_\a-p)$ and $a$ is invertible up to
$(1-(p_\a-p),1-q'_\a)$. By $qa(1-p)=0$ and by Lemma \ref{treci deo leme 2.4} we have $q'_\a\le 1-q$. Set
$q_\a=q+q_\a'$. We have
\begin{equation}\label{prelaz sa p_a na q_a}
(q_\a-q)a(p_\a-p)=a(p_\a-p)
\end{equation}
and as a consequence (\ref{prelaz sa p_a na q_a v1}). Indeed, using $qa(1-p)=0$, we find $qa=qap$ and hence
$qap_\a=qapp_\a=qap$. Thus $(q_\a-q)a(p_\a-p)=q_\a ap_\a-q_\a ap-qap_\a+qap=q_\a ap_\a-q_\a a p=q_\a
a(p_\a-p)$. Thus, (\ref{prelaz sa p_a na q_a}) becomes $q_\a a(p_\a-p)=a(p_\a-p)$ which is equivalent to
(\ref{prelaz sa p_a na q_a v1}).

Since $p_{\alpha}-p\leq 1-p$ we have $qa(p_{\alpha}-p)=0$, and from (\ref{prelaz sa p_a na q_a v1}) we have
$$0=(1-q_\a)a(p_\a-p)=(1-q_\a)a(p_\a-p)+qa(p_{\alpha}-p)=(1-(q_\a-q))a(p_\a-p).$$
By Lemma \ref{razlaganje} $a$ is invertible up to $(p_\a,q_\a)$. Also, $q'_\a\sim p_\a-p\in\mathcal
F$ and therefore $q_\a=q'_\a+q\in\mathcal F$, as well. The first claim is proved.

Let us prove that $q_\a$ is a left approximate unit for $\mathcal F$. Let $a'=(1-q)a(1-p)$, and let
$f\in\mathcal F$. Since $qa(1-p)=0$, there holds $a'=a-(1-q)ap-qap$. Therefore
$(1-q_\a)a'=(1-q_\a)a-(1-q_a)ap$, since $q_\a\ge q$. Now, by (\ref{prelaz sa p_a na q_a v1}), we have
$(1-q_\a)a'f=(1-q_\a)a(1-p)f=(1-q_\a)a(1-p_\a)f\to0$ in norm topology for any $f\in\mathcal F$, since
$(1-q_\a)$ is norm bounded and $p_\a$ is an approximate unit. Thus $q_\a$ is a left approximate unit for
$a'\mathcal F$. However, any $f\in\mathcal F$ can be expressed as $f=(1-q)f+qf=a'bf+qf$. Using $q\le q_\a$ we
get $(1-q_\a)q=0$ and therefore
$$(1-q_\a)f=(1-q_\a)a'bf\to0,\qquad\mbox{in norm}.$$

To finish the proof, note that the mapping $p_\a\mapsto q_\a$ preserves order. Indeed, if $p_\beta\ge p_\a$,
we have
$$(q_\a-q)a(p_\a-p)=a(p_\a-p),\qquad (q_\beta-q)a(p_\beta-p)=a(p_\beta-p),$$
where $q_\a-q$, $q_\beta-q$ have minimal property. Multiply the second equality by $(p_\a-p)$ and using
$(p_\beta-p)(p_\a-p)=p_\a-p$ we find $(q_\beta-q)a(p_\a-p)=a(p_\a-p)$, which implies $q_\beta\ge q_\a$ by
minimal property.

Since $\mathcal F$ is a $*$-ideal, we have that $q_\a$ is a right approximate unit, as well.
\end{proof}

\subsection*{Index and its properties}

First, we establish that the difference $[p]-[q]$ will not change as long as a fixed $a$ is invertible up to
$(p,q)$. We need such a result to define the index, exactly to be the mentioned difference in the sequel
Definition.

\begin{proposition}\label{korektnost indeksa}
Let $a\in\mathcal A$ be invertible up to $(p,q)$, and also invertible up to $(p',q')$,
and let $p$, $q$, $p'$, $q'\in\mathcal F$. Then in $K(\mathcal F)$ we have
$$[p]-[q]=[p']-[q'].$$
\end{proposition}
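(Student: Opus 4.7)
The plan is to reduce the comparison, via controlled modifications of the four projections that leave the classes $[p]-[q]$ and $[p']-[q']$ unchanged in $K(\mathcal F)$, to a configuration where both pairs are triangular with the \emph{same} first projection $p_0$; then a faithful-representation argument will force the two second projections to coincide on the nose, closing the chain $[p]-[q]=[p_0]-[q_0]=[p_0]-[q_0']=[p']-[q']$.

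First I would apply Proposition~\ref{trijagularizacija} to each pair to pass, without affecting classes in $K(\mathcal F)$, to the triangular assumption $qa(1-p)=0$ and $q'a(1-p')=0$. Using Lemma~\ref{utapanje u jedinicu} I fix an approximate unit $\{p_\alpha\}\subseteq\mathcal F$ with $p\le p_\alpha$ for all $\alpha$; Lemma~\ref{ekvivalentan nekoj jedinici} then produces, for any prescribed tolerance, some $\alpha_0$ and a projection $p''\sim p'$ with $p''\le p_{\alpha_0}$ and $\|p'-p''\|$ as small as I wish. For tolerance below the bound in Lemma~\ref{mala pretumbacija}~b), that lemma ensures that $a$ remains invertible up to $(p'',q')$, and a further application of Proposition~\ref{trijagularizacija} replaces $q'$ by an equivalent $\tilde q''$ with $\tilde q''a(1-p'')=0$. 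Setting $p_0:=p_{\alpha_0}$ and relabelling $(p'',\tilde q'')$ as $(p',q')$, I am now in the situation $p,p'\le p_0\in\mathcal F$ with both pairs triangular and neither class disturbed.

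To extend each pair to $p_0$, I apply Lemma~\ref{lema 2.4.} with $r=p_0-p\le 1-p$, obtaining the minimal projection $q_0^{\star}\in\mathcal F$ satisfying $q_0^{\star}a(p_0-p)=a(p_0-p)$ and $q_0^{\star}\sim p_0-p$; Lemma~\ref{treci deo leme 2.4} together with $qa(1-p)=0$ then forces $q_0^{\star}\le 1-q$. Putting $q_0:=q+q_0^{\star}$, the identity $(1-q-q_0^{\star})a(p_0-p)=0$ (a consequence of the defining property of $q_0^{\star}$ and triangularity of $q$) allows Lemma~\ref{razlaganje} (Case $1^\circ$) to conclude that $a$ is invertible up to $(p_0,q_0)$. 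The orthogonality of $q$ and $q_0^{\star}$, together with $q_0^{\star}\sim p_0-p$, gives $[q_0]=[q]+[p_0]-[p]$ in $K(\mathcal F)$, i.e.\ $[p]-[q]=[p_0]-[q_0]$; the identical procedure applied to $(p',q')$ yields $q_0'$ with $[p']-[q']=[p_0]-[q_0']$.

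It remains to show $[q_0]=[q_0']$. Re-triangularizing $(p_0,q_0)$ and $(p_0,q_0')$ via Proposition~\ref{trijagularizacija} gives triangular forms $(p_0,\tilde q_0)$, $(p_0,\tilde q_0')$ with $\tilde q_0\sim q_0$ and $\tilde q_0'\sim q_0'$. In any faithful representation $\rho:\mathcal A\hookrightarrow B(H)$, the triangularity $\tilde q_0\,a(1-p_0)=0$ together with the existence of a $(p_0,\tilde q_0)$-inverse pin down $(I-\rho(\tilde q_0))H$ as exactly $\ran(\rho(a)(I-\rho(p_0)))$, which by Lemma~\ref{lema 2.4.}~$(ii)$ is a closed subspace whose associated projection lies in $\rho(\mathcal A)$ and depends solely on $a$ and $p_0$. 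The same identification applies to $\tilde q_0'$, so $\tilde q_0=\tilde q_0'$ by faithfulness of $\rho$, giving $[q_0]=[q_0']$ and completing the chain. The most delicate step is the very first reduction: bringing $p'$ literally below $p_0$ forces a norm perturbation, and one has to verify that the three successive operations---norm perturbation, passage to the equivalent $p''$, and re-triangularization of $q'$---compose cleanly without disturbing $[p']-[q']$.
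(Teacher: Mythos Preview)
Your argument is correct, and through step 6 it runs parallel to the paper's proof: triangularize both pairs, embed $p$ in an approximate unit, pull $p'$ under some $p_0=p_{\alpha_0}$ via Lemma~\ref{ekvivalentan nekoj jedinici} and Lemma~\ref{mala pretumbacija}~b), then enlarge each pair to a pair with first projection $p_0$ (your steps 5--6 reproduce, for a single index, exactly the construction inside Proposition~\ref{desna apr jed}).

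The genuine divergence is at step~7. The paper does \emph{not} argue that the two second projections coincide. Instead it invokes Proposition~\ref{desna apr jed} to obtain a second approximate unit $q_\alpha\ge q$ with $q_\alpha-q\sim p_\alpha-p$, pulls $q'$ under $q_\alpha$ as $q''$, and then proves the equivalence $p_\alpha-p''\sim q_\alpha-q''$ by a norm estimate: enlarging $\alpha$ to make $\|(1-q_\alpha)a(p_\alpha-p'')\|$ small enough for the general case of Lemma~\ref{razlaganje}. Your route replaces this estimate entirely by a uniqueness observation: once both pairs are triangular with the \emph{same} $p_0$, the identity $\tilde q_0 a(1-p_0)=0$ together with $(p_0,\tilde q_0)$-invertibility forces $\rho(1-\tilde q_0)H=\ran\bigl(\rho(a)\rho(1-p_0)\bigr)$ in any faithful representation, so $\tilde q_0$ is determined on the nose by $a$ and $p_0$. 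This is cleaner and more conceptual --- it needs only Case~$1^\circ$ of Lemma~\ref{razlaganje} and avoids the somewhat delicate bookkeeping of enlarging $\alpha$ twice and tracking the constant $C$ from~(\ref{ocena_unitarnog}). The paper's approach, on the other hand, stays within the algebra and yields along the way the quantitative control that later statements (e.g.\ the index theorem) reuse, so neither is strictly superior.
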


\begin{proof} Due to the Proposition \ref{trijagularizacija}, we may assume that $qa(1-p)=q'a(1-p')=0$. Since
the same proposition follows that assumption will not change classes $[q]$ and $[q']$.

By Lemma \ref{utapanje u jedinicu} there is an approximate unit $p_\a$ of $\mathcal F$ containing $p$. For
all $\a$ we have
$$p_\a=p+r_\a.$$
By Lemma \ref{ekvivalentan nekoj jedinici}, choose $\a$ large enough, such that $p''\sim p'$,
$||p''-p'||<\delta$ and
$$p_\a=p''+r'_\a,$$
where $\delta<1/2$, $\delta<1/(4C||a||\,||b'||)$, $C$ is the absolute constant from (\ref{ocena_unitarnog})
and $b'$ is $(p',q')$-inverse of $a$.

The previous two displayed formulae yields
\begin{equation}\label{pp'Kgroup}
[p]+[r_\a]=[p']+[r'_\a]
\end{equation}

By Proposition \ref{desna apr jed}, there is another approximate unit $q_\a\ge q$ such that $q_\a-q\sim
p_\a-p$ and such that (\ref{prelaz sa p_a na q_a v1}) holds. Let $s_\a=q_\a-q$. Then
\begin{equation}\label{rsim s}
q_\a=q+s_\a,\qquad r_\a\sim s_\a.
\end{equation}

Enlarging $\a$, if necessary, there is $q''\le q_\a$ such that $q''\sim q'$ and $||q''-q'||<\delta$ and
consequently
$$q_\a=q''+s'_\a.$$

Thus we have, also
\begin{equation}\label{qq'Kgroup}
[q]+[s_\a]=[q']+[s'_\a].
\end{equation}

By Lemma \ref{mala pretumbacija} b), using $||p''-p'||$, $||q''-q'||<1/(4C||a||\,||b'||)$, we conclude that
$a$ is invertible up to $(p'',q'')$. Also, if $b''$ is $(p'',q'')$-inverse of $a$, then $||b''||\le2||b'||$.

Next, we want to estimate $||(1-q_\a)a(p_\a-p'')||$. Using (\ref{prelaz sa p_a na q_a v1}) we have
\begin{eqnarray*}
(1-q_\a)a(p_\a-p'')&=&(1-q_{\alpha})a
(p_{\alpha}-p+p-p'+p'-p'')\\
&=&(1-q_{\alpha})a
(p_{\alpha}-p)+(1-q_{\alpha})a(p-p')+(1-q_{\alpha})a(p'-p'')\\
&=&(1-q_\a)a(p-p')+(1-q_\a)a(p'-p'').
\end{eqnarray*}
Enlarging $\a$, once again, we can assume that $||(1-q_\a)a(p-p')||<||a||\delta$. Thus, we have
$||(1-q_\a)a(p_\a-p'')||<2||a||\delta<1/(2C||b'||)\le1/(C||b''||)$. Since $C>1$, we can apply Lemma
\ref{razlaganje} to obtain $q_\a-q''\sim p_\a-p''$, i.e.
\begin{equation}\label{r'sims'}
r'_\a\sim s'_\a.
\end{equation}

Subtracting (\ref{pp'Kgroup}) and (\ref{qq'Kgroup}) we obtain
$$[p]+[r_\a]-[q]-[s_\a]=[p']+[r'_\a]-[q']-[s'_\a]$$
which finishes the proof, because $[r_\a]-[s_\a]=0$ by (\ref{rsim s}), and $[r'_\a]-[s'_\a]=0$ by
(\ref{r'sims'}).
\end{proof}

\begin{remark} Note that in all previous proofs, we obtain Murray von Neumann equivalency, whereas, at this
moment, we include the cancellation law in $K$ group, which produces that it can be $[p]=[q]$ though $p$ and
$q$ might not be Murray - von Neumann equivalent. Namely, the Grothendick functor expand initial equivalence relation, if the
underlying semigroup does not satisfy the cancellation law. This fact is known in $K$ theory as "stable
equivalency".
\end{remark}

\begin{definition}\label{definicijaFredholm}

Let $\mathcal F$ be finite type elements. We say that $a\in\mathcal A$ is of Fredholm type (or abstract
Fredholm element) if there are $p$, $q\in\mathcal F$ such that $a$ is invertible up to $(p,q)$. The index of
the element $a$ (or abstract index) is the element of the group $K(\mathcal F)$ defined by
$$\ind(a)=([p],[q])\in K(\mathcal F),$$
or less formally
$$\ind(a)=[p]-[q].$$
Note that the index is well defined due to the Proposition \ref{korektnost indeksa}.
\end{definition}

Now, we proceed deriving the properties of the abstract index, defined in the previous Definition.

\begin{proposition} The set of Fredholm type elements is open in $\mathcal A$ and the index is a locally
constant function.
\end{proposition}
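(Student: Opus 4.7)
The plan is to derive both assertions directly from Lemma \ref{mala pretumbacija} a), which has already done the real work: it says that invertibility up to a fixed pair $(p,q)$ is preserved under small additive perturbations, with the same pair of projections serving as witness.

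First, suppose $a\in\mathcal A$ is of Fredholm type, so that there exist $p,q\in\mathcal F$ with $a$ invertible up to $(p,q)$; let $b$ be a $(p,q)$-inverse of $a$. For any $a'\in\mathcal A$ satisfying $\|a'-a\|<\|b\|^{-1}$, set $c=a'-a$. Then $\|c\|<\|b\|^{-1}$, so by Lemma \ref{mala pretumbacija} a) the element $a'=a+c$ is invertible up to the same pair $(p,q)\in\mathcal F\times\mathcal F$. In particular $a'$ is of Fredholm type, so the open ball of radius $\|b\|^{-1}$ around $a$ consists entirely of Fredholm type elements. This proves openness.

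For local constancy, note that for every $a'$ in this ball the same pair $(p,q)$ witnesses Fredholmness, so by Definition \ref{definicijaFredholm} we have
$$\ind(a')=[p]-[q]=\ind(a).$$
Thus the index is constant on an open neighborhood of $a$, and since $a$ was arbitrary, the index is a locally constant function on the set of Fredholm type elements.

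There is no genuine obstacle here; Proposition \ref{korektnost indeksa}, which guarantees well-definedness of the index, together with the perturbation statement of Lemma \ref{mala pretumbacija} a), immediately yield both conclusions without any further argument.
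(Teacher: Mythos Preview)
Your proof is correct and follows exactly the same approach as the paper, which simply states that the result follows immediately from Lemma \ref{mala pretumbacija}. You have merely spelled out the details: the same pair $(p,q)$ witnesses Fredholmness throughout the ball $\|a'-a\|<\|b\|^{-1}$, giving both openness and local constancy of the index at once.
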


\begin{proof} It follows immediately from Lemma \ref{mala pretumbacija}.
\end{proof}

\begin{proposition}\label{1 plus f je u F} a) Let $a\in\mathcal A$ be of Fredholm type, and let $f\in\mathcal
F$. Then $a+f$ is also of Fredholm type, and $\ind(a+f)=\ind a$.

b) If $f\in\mathcal F$, then $1+f$ is of Fredholm type, and $\ind(1+f)=0$. Moreover, there is $p\in\mathcal
F$ such that $1+f$ is invertible up to $(p,p)$.
\end{proposition}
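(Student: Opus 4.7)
My plan for part (a) is to split the perturbation as $f=h+g$, where $g\in\mathcal F$ will be concentrated in a projection large enough to kill it after compression by $(1-p_\a)$, while $h$ will be small enough to be absorbed by Lemma \ref{mala pretumbacija}. For part (b), the Fredholm property and the vanishing of the index will follow immediately from (a), and the ``moreover'' statement will be handled by a Neumann series in the corner algebra $(1-p)\mathcal A(1-p)$.

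To carry out part (a), I would first apply Proposition \ref{trijagularizacija} to reduce to the case $qa(1-p)=0$, and let $b$ be the resulting $(p,q)$-inverse of $a$. By Lemma \ref{utapanje u jedinicu} I can choose an approximate unit $\{p_\a\}\subset\mathcal F$ with $p_\a\ge p$ for every $\a$. Since $p_\a f\to f$ and $f p_\a\to f$, the triangle inequality gives $||f-p_\a f p_\a||\to 0$, so I would fix $\a$ large enough that $||f-p_\a f p_\a||<||b||^{-1}$ and set $g:=p_\a f p_\a\in\mathcal F$, $h:=f-g$. Lemma \ref{mala pretumbacija}~a) then makes $a+h$ invertible up to $(p,q)$. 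Next, I would apply Proposition \ref{trijagularizacija} a second time, now to $a+h$, to obtain $\tilde q\sim q$ in $\mathcal F$ with $\tilde q(a+h)(1-p)=0$, and then invoke Proposition \ref{desna apr jed} to produce an approximate unit $\{\tilde q_\a\}\subset\mathcal F$ with $\tilde q_\a\ge\tilde q$, $\tilde q_\a-\tilde q\sim p_\a-p$, such that $a+h$ is invertible up to $(p_\a,\tilde q_\a)$. The decisive observation will be that $g(1-p_\a)=p_\a f p_\a(1-p_\a)=0$, which yields
$$(1-\tilde q_\a)(a+f)(1-p_\a)=(1-\tilde q_\a)(a+h)(1-p_\a),$$
so $a+f$ is invertible up to $(p_\a,\tilde q_\a)$ with the same almost inverse as $a+h$. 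The index then computes to $[p_\a]-[\tilde q_\a]=([p]+[p_\a-p])-([q]+[p_\a-p])=[p]-[q]=\ind a$, using $[\tilde q]=[q]$ and $[\tilde q_\a-\tilde q]=[p_\a-p]$.

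Part (b) will be short. Applying (a) to $a=1$, which is invertible up to $(0,0)$ with inverse $1$ and hence of index $0$, immediately gives that $1+f$ is of Fredholm type with $\ind(1+f)=0$. For the ``moreover'', I would take any approximate unit $\{p_\a\}\subset\mathcal F$; since $(1-p_\a)f\to 0$ and $f(1-p_\a)\to 0$, we have $||(1-p_\a)f(1-p_\a)||\to 0$, so fixing $p:=p_\a$ with $||(1-p)f(1-p)||<1$ makes $(1-p)(1+f)(1-p)=(1-p)+(1-p)f(1-p)$ invertible in the unital corner algebra $(1-p)\mathcal A(1-p)$ by a convergent Neumann series. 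The resulting element lies in $(1-p)\mathcal A(1-p)$ and is exactly a $(p,p)$-inverse of $1+f$.

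The main obstacle I foresee is the coupled enlargement of the two projections: making $p$ large enough to absorb $f$ forces a simultaneous enlargement of $q$ that must both preserve almost invertibility and keep the class $[p]-[q]$ unchanged in $K(\mathcal F)$. Proposition \ref{desna apr jed} is tailor-made for this transfer, but it requires the triangular condition $\tilde q(a+h)(1-p)=0$; that is why applying Proposition \ref{trijagularizacija} a second time---to $a+h$ rather than to the original $a$---is essential. Once the triangular form is arranged, everything else is bookkeeping inside $K(\mathcal F)$.
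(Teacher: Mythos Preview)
Your proof is correct. It uses the same key ingredients as the paper (Proposition~\ref{trijagularizacija}, Proposition~\ref{desna apr jed}, Lemma~\ref{mala pretumbacija}) but applies them in a different order, which makes your argument slightly longer than necessary.

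The paper's route for (a) is more direct: after arranging $qa(1-p)=0$, it applies Proposition~\ref{desna apr jed} once, to $a$ itself, producing approximate units $p_\a\ge p$, $q_\a\ge q$ with $a$ invertible up to $(p_\a,q_\a)$. Since $f(1-p_\a)\to 0$ and the $(p_\a,q_\a)$-inverse can be taken as $b_\a=(1-p_\a)b(1-q_\a)$ (this is what the proof of Lemma~\ref{razlaganje}, case~$1^\circ$, actually yields), one has $\|b_\a\|\le\|b\|$; a single application of Lemma~\ref{mala pretumbacija}~a) with the perturbation $(1-q_\a)f(1-p_\a)$ then makes $a+f$ invertible up to $(p_\a,q_\a)$, and the index is unchanged. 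Your splitting $f=h+g$ and the second triangularization of $a+h$ are therefore avoidable; on the other hand, your approach has the minor advantage that you never need the bound $\|b_\a\|\le\|b\|$, because $g(1-p_\a)=0$ exactly rather than approximately.

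For part~(b), your Neumann-series argument in the corner algebra $(1-p)\mathcal A(1-p)$ is essentially what the paper does as well; the paper just compresses it to one line by observing that, when $a=1$, the construction in Proposition~\ref{desna apr jed} gives $q_\a=p_\a$, so the part-(a) argument already produces a $(p_\a,p_\a)$-inverse of $1+f$.
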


\begin{proof} a) Let $p$, $q\in\mathcal F$ be projections such that $a$ is invertible up to $(p,q)$. By
Proposition \ref{desna apr jed} there are approximate units $p_\a\ge p$, $q_\a\ge q$ such that $a$ is
invertible up to $(p_\a,q_\a)$. Choose $\a$ large enough such that $||f||\leq||b_{\alpha}||^{-1}$,
where $b_{\alpha}$ is  $(p_{\alpha},q_{\alpha})$-inverse of $a$. Then,
$$(1-q_{\alpha})(a+f)(1-p_{\alpha})=(1-q_{\alpha})a(1-p_{\alpha})+(1-q_{\alpha})f(1-p_{\alpha})$$
and $||(1-q_{\alpha})f(1-p_{\alpha})\leq||f||\leq||b_{\alpha}||^{-1}$.
By Lemma
\ref{mala pretumbacija}, $a+f$ is also invertible up to $(p_\a,q_\a)$ and hence $\ind(a+f)=[p_\a]-[q_\a]=\ind
a$;

b) In this special case, $a=1$ and we can choose $q_\a=p_\a$.
\end{proof}

\begin{proposition}\label{OurAtkinson} a) If $a$ is of Fredholm type, then $a$ is invertible modulo $\mathcal F$;

b) Conversely, if $a$ is invertible modulo $\mathcal F$, then $a$ is of Fredholm type.
\end{proposition}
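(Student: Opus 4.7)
The plan is to prove (a) by a direct computation from the definition and to prove (b) by combining Proposition \ref{1 plus f je u F} with Lemma \ref{lema 2.4.}, with the key final step exploiting the unused half of the invertibility-modulo-$\mathcal{F}$ hypothesis.

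For (a), I would take a $(p,q)$-inverse $b$ of $a$ and just compute $ab$ and $ba$. The relation $b = (1-p)b(1-q)$ gives $pb = bq = 0$, so $ab = a(1-p)b = (1-q)a(1-p)b + qa(1-p)b = (1-q) + qa(1-p)b$, and therefore $ab - 1 = -q + qa(1-p)b$, which lies in $\mathcal{F}$ because $q \in \mathcal{F}$ and $\mathcal{F}$ is a two-sided ideal. A symmetric calculation gives $ba - 1 = -p + b(1-q)ap \in \mathcal{F}$, so $b$ inverts $a$ modulo $\mathcal{F}$.

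For (b), write $ba = 1 + f$ and $ab = 1 + f'$ with $f, f' \in \mathcal{F}$. Proposition \ref{1 plus f je u F}(b) applied to $ba$ produces a projection $p \in \mathcal{F}$ and a $(p,p)$-inverse $b_0$ of $ba$. Using $b_0 p = 0$, the identity $b_0(1-p)\,ba\,(1-p) = 1-p$ collapses to $(b_0 b)\,a(1-p) = 1-p$, exhibiting $a$ as left invertible up to $p$ in the sense of Lemma \ref{lema 2.4.}. That Lemma with $r = 1-p$ then yields a polar decomposition $a(1-p) = v|a(1-p)|$ inside $\mathcal{A}$, a projection $s = v^*v \in \mathcal{A}$ with $sa(1-p) = a(1-p)$, and the conclusion that $a$ is invertible up to $(p, 1-s)$. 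Thus the only thing left to check is $1-s \in \mathcal{F}$.

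This last step is the main obstacle, and I would handle it by bringing in the so-far-unused relation $ab = 1 + f'$ together with the orthogonality $(1-s)a(1-p) = 0$ coming from the minimality of $s$. Multiplying $ab = 1 + f'$ by $(1-s)$ on the left gives $(1-s)ab = (1-s) + (1-s)f'$, while a direct split yields $(1-s)ab = (1-s)a(1-p)b + (1-s)apb = (1-s)apb$. Comparing the two expressions, $(1-s) = (1-s)(apb - f')$; since $apb \in \mathcal{F}$ (because $p \in \mathcal{F}$ and $\mathcal{F}$ is a two-sided ideal) and $f' \in \mathcal{F}$, the ideal property forces $1-s \in \mathcal{F}$, completing (b).
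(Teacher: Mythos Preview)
Your proof is correct. Part (a) is essentially identical to the paper's argument. In part (b) the overall architecture matches the paper's: use Proposition~\ref{1 plus f je u F}(b) on one of the products $ab$, $ba$ to get a finite projection $p$, feed this into Lemma~\ref{lema 2.4.} with $r=1-p$ to obtain invertibility of $a$ up to $(p,1-s)$, and then argue that $1-s\in\mathcal F$. The paper works with $ab_1$ first (so it passes through $a^*$), while you start from $ba$; these are dual and interchangeable.

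The genuine difference is in the last step. To show the remaining projection lies in $\mathcal F$, the paper invokes Proposition~\ref{1 plus f je u F}(b) a second time on $b_2(1-p)a$, then applies the triangularization Proposition~\ref{trijagularizacija} to produce an auxiliary $q'$, and finally deduces $r=qr\in\mathcal F$. Your route is shorter: from the minimality relation $(1-s)a(1-p)=0$ and the so-far-unused identity $ab=1+f'$ you extract $1-s=(1-s)(apb-f')$ directly, which lands in $\mathcal F$ by the ideal property. This avoids both the second appeal to Proposition~\ref{1 plus f je u F} and any use of Proposition~\ref{trijagularizacija}, so your argument is more elementary while using exactly the same ingredients up to that point.
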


\begin{proof} a) Assume that $a$ is invertible up to $(p,q)$. Let $b$ be $(p,q)$-inverse of $a$. Then
$ab=(1-q)ab+qab=(1-q)a(1-p)b+qab=1-q+qab\in 1+\mathcal F$, and also
$ba=ba(1-p)+bap=b(1-q)a(1-p)+bap=1-p+bap\in1+\mathcal F$.

b) Let $ab_1=1+f_1$, $b_2a=1+f_2$, $f_1$, $f_2\in\mathcal F$. By the previous Proposition, there is $p\in\mathcal F$ such that $ab_1$ is invertible up to $(p,p)$. It follows that $a^*$ is left invertible up to $p$ (its left inverse is $c^*(1-p)b_1^*$, where $c\in(1-p)\mathcal A(1-p)$ is $(p,p)$-inverse of $ab_1$). Therefore, by Lemma \ref{lema 2.4.}, there is a projection $1-r\in\mathcal A$ such that $(1-r)a^*(1-p)=a^*(1-p)$, or
equivalently,
\begin{equation}\label{Atkinson}
(1-p)a=(1-p)a(1-r).
\end{equation}
Furthermore, $a$ is invertible up to $(p,r)$. It remains to prove $r\in\mathcal F$.

Considering $(1-p)a$ instead of $a$, we find that
$$b_2(1-p)a=b_2a-b_2pa=1+f_2-b_2pa=1+f_2'\in1+\mathcal F.$$
Again, by the previous Proposition, there is a $q\in\mathcal F$ such that $b_2(1-p)a$ is invertible up to
$(q,q)$. By Proposition \ref{trijagularizacija}, there is a projection $q'\in\mathcal F$ such that
$b_2(1-p)a$ is invertible up to $(q',q)$ and such that $(1-q)b_2(1-p)aq'=0$. The last is equivalent to
$$(1-q)b_2(1-p)a(1-q')=(1-q)b_2(1-p)a.$$
If $t$ is $(q',q)$-inverse of $b_2(1-p)a$, then
$$(1-q)r=t(1-q)b_2(1-p)a(1-q')r=t(1-q)b_2(1-p)ar=0,$$
since from (\ref{Atkinson}) we easily find $(1-p)ar=0$. Thus, $r=qr\in\mathcal F$, since $\mathcal F$ is an
ideal.
\end{proof}

\begin{theorem}[index theorem]\label{teorema o indeksu} Let $\mathcal A$  be a
$C^*$-algebra, and let $\mathcal F\subseteq\mathcal A$ be an algebra of finite type elements. If $t_1$ and
$t_2$ are Fredholm type elements then $t_1t_2$ is of Fredholm type as well. Moreover there holds
$$\ind(t_1t_2)=\ind t_1+\ind t_2.$$

In other words, If we denote the set of all Fredholm type elements by $\mathrm{Fred}(\mathcal F)$, then
$\mathrm{Fred}(\mathcal F)$ is a semigroup (with unit) with respect to multiplication, and the mapping $\ind$
is a homomorphism from $(\mathrm{Fred}(\mathcal F),\cdot)$ to $(K(\mathcal F),+)$.
\end{theorem}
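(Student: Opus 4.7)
The plan is to establish Fredholmness of $t_1 t_2$ via Atkinson's criterion and then compute the index via a near-matched configuration, obtained by stabilizing one intermediate projection with an approximate unit.

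First, Fredholmness of $t_1 t_2$ will follow from Proposition~\ref{OurAtkinson}: writing $t_i b_i = 1 + f_i$ and $b_i t_i = 1 + g_i$ with $f_i, g_i \in \mathcal F$, the products $(t_1 t_2)(b_2 b_1)$ and $(b_2 b_1)(t_1 t_2)$ lie in $1 + \mathcal F$ (because $\mathcal F$ is an ideal), so Proposition~\ref{OurAtkinson} gives that $t_1 t_2$ is Fredholm.

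For the index, I would first apply Proposition~\ref{trijagularizacija} to replace $p_1$ and $q_2$ by equivalent projections so that $(1-q_1) t_1 p_1 = 0$ and $q_2 t_2 (1-p_2) = 0$; indices are preserved by Proposition~\ref{korektnost indeksa}. Using Lemma~\ref{utapanje u jedinicu} choose an approximate unit $(p_\alpha)$ with $p_1 \le p_\alpha$ for all $\alpha$; by Lemma~\ref{ekvivalentan nekoj jedinici}, for $\alpha$ sufficiently large there is $q_2' \sim q_2$ with $q_2' \le p_\alpha$ and $\|q_2 - q_2'\|$ arbitrarily small. Proposition~\ref{desna apr jed} extends $t_1$'s invertibility pair to $(p_\alpha, q_1')$ with $q_1' - q_1 \sim p_\alpha - p_1$ and the propagated triangularization $(1-q_1') t_1 p_\alpha = 0$, while Lemma~\ref{mala pretumbacija}(b) gives $t_2$ invertible up to $(p_2, q_2')$ for $\|q_2 - q_2'\|$ small enough. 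Setting $a_2 = (1-q_2') t_2 (1-p_2)$, the two triangularizations together with $q_2' \le p_\alpha$ yield
\[
(1-q_1')(t_1 t_2)(1-p_2) = \bigl[(1-q_1') t_1 (1-p_\alpha)\bigr]\,\bigl[(1-p_\alpha) a_2\bigr] + O(\varepsilon),\quad \varepsilon = \|q_2 - q_2'\|.
\]
The second factor has a kernel of class $[p_\alpha - q_2']$, which I absorb by applying Lemma~\ref{lema 2.4.} to the $(p_2, q_2')$-inverse $b_2$ of $t_2$ with $r = p_\alpha - q_2'$: this produces $s' \le 1-p_2$ in $\mathcal F$, $s' \sim p_\alpha - q_2'$, such that $(1-p_\alpha) a_2 (1-p_2 - s')$ is invertible in the corner with explicit inverse $(1-p_2-s') b_2 (1-p_\alpha)$; I also verify $(1-q_1')(t_1 t_2) s' = O(\varepsilon)$ using the triangularization of $t_1$ at level $p_\alpha$ and $q_2' \le p_\alpha$. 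Lemma~\ref{mala pretumbacija}(a) then implies $t_1 t_2$ is invertible up to $(p_2 + s', q_1')$ for $\varepsilon$ small, and
\[
\ind(t_1 t_2) = [p_2] + [p_\alpha - q_2'] - [q_1'] = [p_2] - [q_2] + [p_1] - [q_1] = \ind t_2 + \ind t_1.
\]

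The main technical hurdle is the kernel correction via Lemma~\ref{lema 2.4.}: I must verify that the $s'$ so obtained exactly absorbs the kernel of $(1-p_\alpha) a_2$, that $(1-q_1')(t_1 t_2) s'$ is of order $\varepsilon$ rather than a large remainder, and that the inverse above is indeed a valid $(p_2+s', q_1')$-inverse in $\mathcal A$ --- each step relying crucially on $q_2' \le p_\alpha$ and on the propagation of the triangularizations to the extension level.
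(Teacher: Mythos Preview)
Your overall strategy --- stabilize the intermediate projection via an approximate unit and then match up the two invertibility pairs --- is the same as the paper's, and the opening use of Proposition~\ref{OurAtkinson} for Fredholmness is fine. The gap is in your invocation of Proposition~\ref{desna apr jed}.

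That proposition has the hypothesis $q\,a\,(1-p)=0$, and its conclusion is only $(1-q_\alpha)\,a\,(p_\alpha-p)=0$. You, however, triangularize $t_1$ so that $(1-q_1)\,t_1\,p_1=0$ (the \emph{other} triangular form), which does not meet the hypothesis; and you then assert the stronger conclusion $(1-q_1')\,t_1\,p_\alpha=0$. To get that stronger conclusion you would need \emph{both} $(1-q_1')\,t_1\,(p_\alpha-p_1)=0$ (which requires $q_1\,t_1\,(1-p_1)=0$) \emph{and} $(1-q_1')\,t_1\,p_1=0$ (which requires $(1-q_1)\,t_1\,p_1=0$ together with $q_1\le q_1'$). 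These two triangular conditions on $t_1$ cannot be arranged simultaneously in general: Proposition~\ref{trijagularizacija} lets you adjust one of $p_1,q_1$, not both, and forcing both would make $t_1$ block-diagonal with respect to $(p_1,q_1)$. Without $(1-q_1')\,t_1\,p_\alpha=0$, the term $(1-q_1')\,t_1\,(p_\alpha-q_2')\,a_2$ is not $O(\varepsilon)$, so your displayed factorization of $(1-q_1')(t_1t_2)(1-p_2)$ breaks down, and with it the claims that $(1-q_1')(t_1t_2)s'=O(\varepsilon)$ and that $t_1t_2$ is invertible up to $(p_2+s',q_1')$.

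The paper avoids exactly this difficulty by not trying to show invertibility of $t_1t_2$ directly at this stage. It first arranges (in your notation) $t_1$ invertible up to $(p_\alpha,\,q_1')$ and $t_2$ invertible up to $(p_2+s',\,p_\alpha)$, then observes that the \emph{finite perturbation} $t_1(1-p_\alpha)t_2$ is invertible up to $(p_2+s',\,q_1')$ by a straightforward composition of inverses, computes its index, and finally invokes Proposition~\ref{1 plus f je u F}\,a) to transfer the result to $t_1t_2$ since $t_1t_2 - t_1(1-p_\alpha)t_2 = t_1 p_\alpha t_2 \in \mathcal F$. Your kernel-absorption step via Lemma~\ref{lema 2.4.} applied to $b_2$ is in fact a correct way to produce $s'$ with the right equivalence class; what is missing is precisely this passage through the compact perturbation, which renders the product exactly (not approximately) invertible without needing the unattainable triangularization $(1-q_1')\,t_1\,p_\alpha=0$.
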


\begin{proof} Let $p_1$, $q_1$, $p_2$, $q_2\in\mathcal F$ be projections such that $t_1$ is invertible up to
$(p_1,q_1)$ and $t_2$ is invertible up to $(p_2,q_2)$. By Proposition \ref{desna apr jed}, there are
approximate units $p_\a\ge p_2$, $q_\a\ge q_2$ such that $t_2$ is also invertible up to $(p_\a,q_\a)$
and
\begin{equation}\label{indext2}
\ind t_2=[p_2]-[q_2]=[p_\a]-[q_\a].
\end{equation}
By Lemma \ref{ekvivalentan nekoj jedinici} there is an $\a$ and $p'\sim p_1$ such that $p'\le q_\a$ and
$||p_1-p'||<1/2$. By Lemma \ref{mala pretumbacija} b), $t_1$ is invertible up to $(p',q_1)$, and therefore
$\ind t_1=[p']-[q_1]$

Next, by Proposition \ref{trijagularizacija} there is a projection $q'\sim q_1$ such that $t_1$ is invertible
up to $(p',q')$ and
\begin{equation}\label{indexTHtriang}
q't_1(1-p')=0.
\end{equation}
It follows
\begin{equation}\label{indext1}
\ind t_1=[p']-[q'].
\end{equation}

Let $r=q_\a-p'$. We have $r\le 1-p'$, and by Lemma \ref{lema 2.4.}, there is a minimal projection $s$ such
that
\begin{equation}\label{indexTHdefS}
st_1r=t_1r.
\end{equation}
Also $r\sim s$ and $t_1$ is invertible up to $(1-r,1-s)$. By (\ref{indexTHtriang}) and Lemma \ref{treci deo
leme 2.4} we conclude $s\le 1-q'$. In particular, $1-q'-s$ is a projection and $1-q'-s\le 1-s$. So, from
(\ref{indexTHdefS}) we conclude that $(1-s)t_1r=0$ and hence $(1-q'-s)t_1r=(1-q'-s)(1-s)t_1r=0$. Therefore,
by Lemma \ref{razlaganje} we obtain that $t_1$ is invertible up to $(q_\a,q'+s)$.

Now, it is easy to derive that $t_1(1-q_\a)t_2$ is invertible up to $(p_\a,q'+s)$. Indeed, if $v_1$ is
$(q_\a,q'+s)$-inverse of $t_1$, and $v_2$ is $(p_\a,q_\a)$-inverse of $t_2$, a straightforward calculation
yields that $v_2v_1$ is $(p_\a,q'+s)$-inverse of $t_1(1-q_\a)t_2$. Thus $t_1(1-q_\a)t_2$ is of Fredholm type
and
\begin{multline*}\ind(t_1(1-q_\a)t_2)=[p_\a]-[q']-[s]=[p_\a]-[q_\a]+[q_\a]-[q']-[r]=\\
[p_\a]-[q_\a]+[p']-[q']=\ind t_1+\ind t_2.
\end{multline*}

Since $t_1t_2$ and $t_1(1-q_\a)t_2$ differ by $t_1q_\a t_2\in\mathcal F$, by Proposition \ref{1 plus f je u
F} a) $t_1t_2$ is also of Fredholm type and its index is the same as that of $t_1(1-q_\a)t_2$. The proof is
complete.
\end{proof}

\section{Applications}

In this section we apply results obtained in the previous section in three different ways. Namely, we shall
prove corollaries concerning the ordinary Fredholm operators, the Fredholm operators in the sense of Atiyah
and Singer over the $II_\infty$ factors and Fredholm operators on Hilbert $C^*$-modules over a $C^*$-algebra.
All of these corollaries have been known for a long time. Nevertheless, we shall derive them in order to show
that results obtained in the previous section are a generalization of all of them.

\subsection*{Classic Fredholm operators on a Hilbert space}

\begin{corollary} Let $\mathcal A$ be the full algebra of all bounded operators on some infinite dimensional
Hilbert space $H$, and let $\mathcal F$ be the ideal of all compact operators. Then the couple $(\mathcal
A,\mathcal F)$ satisfy the conditions $(i)-(iii)$ of Definition \ref{definicija konacnih}. Hence, ordinary
Fredholm operators are the special case of Fredholm type elements defined in this note.
\end{corollary}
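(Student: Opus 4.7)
The plan is to verify each of the three axioms of Definition~\ref{definicija konacnih} for the pair $(B(H),K(H))$ and then to invoke the abstract Atkinson theorem, Proposition~\ref{OurAtkinson}, to match Fredholm type elements with classical Fredholm operators. Condition $(i)$ is the standard fact that $K(H)$ is a norm-closed self-adjoint two-sided ideal of $B(H)$: composition of a bounded operator with a compact operator sends the closed unit ball to a relatively compact set, and the adjoint of a compact operator is compact (by Schauder's theorem, or by approximating in norm by finite-rank operators).

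For $(ii)$ I would take the net $\{p_\alpha\}$ of all finite-rank orthogonal projections on $H$, ordered by $\le$. It is directed because, given two such projections $P$ and $Q$, the orthogonal projection onto the finite-dimensional subspace $P(H)+Q(H)$ dominates both, and each $p_\alpha$ is plainly in $K(H)$. To check the approximate-unit property, I would use that $p_\alpha\to I$ in the strong operator topology; since any compact $K$ maps the closed unit ball to a totally bounded subset of $H$, strong convergence is uniform on that set, giving $\|p_\alpha K-K\|\to 0$, and passing to adjoints yields the one-sided statement $\|Kp_\alpha-K\|\to 0$ as well.

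For $(iii)$, given finite-rank projections $p,q\in K(H)$, I would exploit the infinite-dimensionality of $H$: since $\dim p(H)<\infty$, the space $(I-p)(H)$ is infinite-dimensional and so contains a subspace $M$ with $\dim M=\dim q(H)$. Fixing orthonormal bases of $M$ and of $q(H)$, I would let $V$ be the partial isometry sending the former basis onto the latter and vanishing on $M^\perp$. Then $VV^*=q$, while $V^*V=P_M$ is orthogonal to $p$ since $M\subseteq(I-p)(H)$, so $V^*V+p$ is a projection, as required.

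Having verified the axioms, the identification of classical Fredholm operators with Fredholm type elements is immediate from Proposition~\ref{OurAtkinson}: $T$ is of Fredholm type precisely when $T$ is invertible modulo $\mathcal F=K(H)$, which is exactly the content of Atkinson's classical theorem. None of these steps presents a genuine obstacle, the most substantive being the approximate-unit verification in $(ii)$, where the key input is the upgrade from strong to norm convergence on compact operators.
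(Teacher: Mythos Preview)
Your proof is correct and follows essentially the same route as the paper: both verify $(i)$ as a standard fact, take the finite-rank projections as the approximate unit for $(ii)$, and for $(iii)$ exploit that $(I-p)(H)$ is infinite-dimensional to place a copy of $q(H)$ orthogonally to $p(H)$. Your version is slightly more detailed in justifying $(ii)$ (via strong-to-norm upgrade on compacts) and makes explicit the appeal to Proposition~\ref{OurAtkinson} together with the classical Atkinson theorem for the final ``hence'' clause, which the paper's own proof leaves implicit.
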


\begin{proof} $(i)$ The ideal of all compact operators is a selfadjoint ideal in $B(H)$.

$(ii)$ As it is well known, the set of all finite rank projections is an approximate unit for
compact operators. Even more, if the Hilbert space $H$ is separable, there is a countable approximate unit
for $\mathcal F$. In more details if $P_n$ is the projection on the space generated by $\{e_1,\dots,e_n\}$,
where $\{e_j\}$ is an countable complete orthonormal system, then $\{P_n\:|\:n\in\N\}$ is a countable
approximate unit;

$(iii)$ Any compact projection $P\in\mathcal F$ is a finite rank projection. Then $H\ominus PH\cong H$. Let
$V:H\to H\ominus PH$ be an isomorphism. Then $VQV^*$ is a projection equivalent to $Q$ such that $VQV^*(H)\perp
P(H)$. Therefore, $VQV^*+P$ is projection.
\end{proof}

\subsection*{Fredholm operators on a von Neumann algebra}

Our second application is devoted to Fredholm operators in the sense of Breuer \cite{Breuer68,Breuer69} on a
properly infinite von Neumann algebra. First, we give the Breuer definition

\begin{definition}\label{Brojerova definicija}
Let $\mathcal A$ be a von Neumann algebra, let $\Proj(A)$ be the set of all projections belonging to
$\mathcal A$, and let $\Proj_0(\mathcal A)$ be the set of all finite projections in $\mathcal A$ (i.e.\ those
projections that are not Murray von Neumann equivalent to any its proper subprojection).

The operator $T\in\mathcal A$ is said to be $\mathcal A$-Fredholm if $(i)$ $P_{\ker T}\in\Proj_0(\mathcal
A)$, where $P_{\ker T}$ is the projection to the subspace $\ker T$ and $(ii)$ There is a projection
$E\in\Proj_0(\mathcal A)$ such that $\ran(I-E)\subseteq\ran T$. The second condition ensures that $P_{\ker
T^*}$ also belongs to $\Proj_0(\mathcal A)$.

The index of an $\mathcal A$-Fredholm operator $T$ is defined as
\begin{equation}\label{BreuerIndex}
\ind T=\dim(\ker T)-\dim(\ker T^*)\in I(\mathcal A).
\end{equation}

Here, $I(\mathcal A)$ is the so called {\em index group} of a von Neumann algebra $\mathcal A$ defined as the
Grothendieck group of the commutative monoid of all representations of the commutant $\mathcal A'$ generated
by representations of the form $\mathcal A'\ni S\mapsto ES=\pi_E(S)$ for some $E\in\Proj_0(\mathcal A)$
\cite[Section 2]{Breuer68}. For a subspace $L$, its dimension $\dim L$ is defined as the class
$[\pi_{P_L}]\in I(\mathcal A)$ of the representation $\pi_{P_L}$, where $P_L$ is the projection to $L$.
\end{definition}

Before proving that the Breuer's $\mathcal A$-Fredholm operators is a special case of abstract Fredholm
operators, we list some well known facts concerning von Neumann algebras.

\begin{lemma}\label{VN1} The set $\Proj(\mathcal A)=\{p\in \mathcal A\:|\:p\mbox{ is a projection}\}$ is a complete lattice. In particular,
if $p$, $q\in\Proj(\mathcal A)$, then $p\vee q$ (the least upper bound of the set $\{p,q\}$) belongs to
$\Proj(\mathcal A)$.
\end{lemma}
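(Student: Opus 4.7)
The plan is to invoke the double commutant theorem, which characterizes a von Neumann algebra as $\mathcal A = \mathcal A''$. Given two projections $p, q \in \mathcal A$, I would first define $p \vee q$ purely at the Hilbert space level as the orthogonal projection onto the closed subspace $M = \overline{p(H) + q(H)}$, and verify that this is indeed the supremum of $\{p, q\}$ in the lattice of all projections on $H$: clearly $p, q \le P_M$ since $M$ contains both ranges, and any projection $r$ with $r \ge p$ and $r \ge q$ has $r(H) \supseteq p(H) \cup q(H)$, hence $r(H) \supseteq M$, so $r \ge P_M$.

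The remaining point is to show $P_M \in \mathcal A$, and by the double commutant theorem it suffices to show $P_M$ commutes with every $s \in \mathcal A'$. Fix such an $s$. From $sp = ps$ and $\xi = p\xi$ we get $s\xi = sp\xi = ps\xi \in p(H)$, so $s$ leaves $p(H)$ invariant; likewise $s$ leaves $q(H)$ invariant, hence $p(H) + q(H)$, and by boundedness also $M$. Since the commutant is a $*$-algebra, $s^* \in \mathcal A'$, so $s^*$ also leaves $M$ invariant. Therefore $M$ reduces $s$, which means $P_M$ commutes with $s$. Hence $p \vee q = P_M \in \mathcal A'' = \mathcal A$.

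For the complete lattice claim, the same argument works verbatim for an arbitrary family $\{p_\gamma\} \subseteq \Proj(\mathcal A)$: the supremum is the projection onto $\overline{\sum_\gamma p_\gamma(H)}$, which is again invariant under every $s \in \mathcal A'$ (and its adjoint) and hence lies in $\mathcal A$. The infimum is obtained dually, either as the projection onto $\bigcap_\gamma p_\gamma(H)$ (same invariance argument) or via De Morgan as $1 - \bigvee_\gamma (1 - p_\gamma)$.

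There is no serious obstacle here — this is a textbook fact about von Neumann algebras. The only care required is to remember that the algebraic sum of two closed subspaces need not be closed (so one must take the closure when defining $M$), and to invoke the double commutant theorem on elements of $\mathcal A'$ rather than of $\mathcal A$ itself. A self-contained alternative would avoid the commutant by using the SOT-closure of $\mathcal A$ together with Halmos's formula $p \wedge q = \operatorname{SOT-}\lim_n (pq p)^n$, but the commutant argument is shorter and more transparent.
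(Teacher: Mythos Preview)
Your proof is correct. The paper does not actually prove this lemma; it simply cites Takesaki \cite[Proposition V.1.1]{Takesaki} and Blackadar \cite[I.9.2.1(ii)]{CrnaGuja} as references for this standard fact. Your argument via the double commutant theorem is precisely the textbook proof one finds in those references, so there is no real divergence in approach --- you have simply written out what the paper leaves implicit.
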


\begin{proof}The proof can be found in \cite[Proposition V.1.1]{Takesaki} or \cite[I.9.2.1(ii)]{CrnaGuja}.
\end{proof}

The operator $T\in\mathcal A$ is called finite if the projection to the closure of its range
$P_{\overline{\ran T}}\in\Proj_0(\mathcal A)$. The set of all finite operators is denoted by $\mathfrak m_0$,
and its norm closure is denoted by $\mathfrak m$.

\begin{lemma}\label{VN3} The set $\mathfrak m$ is a selfadjoint, two sided ideal of $\mathcal A$. Also, $\mathfrak m$
is generated (as a closed selfadjoint two sided ideal) by $\Proj_0(\mathcal A)$.
\end{lemma}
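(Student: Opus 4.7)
The strategy is to first establish the stronger (algebraic, not yet closed) statement that $\mathfrak m_0$ itself is a selfadjoint two-sided ideal, then pass to the norm closure, and finally identify the generating set.

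\textbf{Step 1 (Ideal properties of $\mathfrak m_0$).} I will use three standard facts about finite projections in a von Neumann algebra $\mathcal A$, all found in the same references as Lemma \ref{VN1}: subprojections of finite projections are finite, Murray--von Neumann equivalence preserves finiteness, and $p \vee q$ is finite whenever $p$, $q \in \Proj_0(\mathcal A)$ (via $p \vee q - q \sim p - p \wedge q \le p$ plus orthogonal additivity of finite projections). With these tools:

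(i) \emph{Selfadjointness.} For $T \in \mathfrak m_0$, the polar decomposition $T = v|T|$ in $\mathcal A$ gives $vv^* = P_{\overline{\ran T}}$ and $v^*v = P_{\overline{\ran T^*}}$, so these two projections are Murray--von Neumann equivalent, hence both finite, so $T^* \in \mathfrak m_0$.

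(ii) \emph{Right ideal property.} For $T \in \mathfrak m_0$ and $S \in \mathcal A$, $\overline{\ran TS} \subseteq \overline{\ran T}$ yields $P_{\overline{\ran TS}} \le P_{\overline{\ran T}}$, hence finite.

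(iii) \emph{Left ideal property.} Given (i), $(ST)^* = T^*S^*$ falls under (ii), so $ST \in \mathfrak m_0$.

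(iv) \emph{Additive closure.} For $T_1, T_2 \in \mathfrak m_0$, $P_{\overline{\ran(T_1+T_2)}} \le P_{\overline{\ran T_1}} \vee P_{\overline{\ran T_2}}$, and the latter is finite by the third standard fact above.

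\textbf{Step 2 (Passing to $\mathfrak m$).} Since addition, scalar multiplication, multiplication by a fixed $S \in \mathcal A$ (left or right), and the adjoint are all norm continuous, the norm closure $\mathfrak m$ of the selfadjoint two-sided ideal $\mathfrak m_0$ is again a selfadjoint two-sided ideal.

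\textbf{Step 3 (Generation by $\Proj_0(\mathcal A)$).} The inclusion $\Proj_0(\mathcal A) \subseteq \mathfrak m_0 \subseteq \mathfrak m$ is immediate, since for a finite projection $p$ we have $P_{\overline{\ran p}} = p$. Conversely, let $J$ be the closed selfadjoint two-sided ideal generated by $\Proj_0(\mathcal A)$. For any $T \in \mathfrak m_0$, write again $T = v|T|$ and let $p = v^*v = P_{\overline{\ran T^*}} \in \Proj_0(\mathcal A)$. The support projection of $|T|$ equals $p$, so $|T| = |T|p$ and therefore $T = v|T|p = Tp$. Since $p \in J$ and $J$ is a right ideal in $\mathcal A$, $T = Tp \in J$. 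Thus $\mathfrak m_0 \subseteq J$; taking norm closure (and using that $J$ is closed) gives $\mathfrak m \subseteq J$. The reverse inclusion is Step 2 combined with $\Proj_0(\mathcal A) \subseteq \mathfrak m$, so $\mathfrak m = J$.

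\textbf{Expected main obstacle.} No single step is really hard, but the proof relies on the three cited facts about finite projections, of which the stability of finiteness under $p \vee q$ is the most delicate (and is the one that genuinely needs the von Neumann algebra hypothesis rather than a general $C^*$-algebra argument). I would therefore make sure to either cite this cleanly or, if desired, sketch the Kaplansky parallelogram identity $p \vee q - q \sim p - p \wedge q$ and orthogonal additivity of finite projections as a brief remark.
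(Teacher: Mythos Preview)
Your proof is correct. The paper itself does not prove this lemma at all but simply cites \cite[Section~3]{Breuer68}, so there is no methodological comparison to make; you have supplied the argument that the paper outsources. One trivial slip: in Step~3 you write ``$J$ is a right ideal'' where you need $J$ to be a \emph{left} ideal (so that $Tp \in J$ when $p \in J$ and $T \in \mathcal A$); since $J$ is two-sided by construction this is of no consequence. Your reliance on the join-of-finite-projections fact is also unproblematic here, since the paper records exactly that as Lemma~\ref{VN4} with its own citation.
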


\begin{proof}This was proved in the Section 3 of \cite{Breuer68}.
\end{proof}

\begin{lemma}\label{VN4} If $p$,
$q\in\Proj_0(\mathcal A)$ then $p\vee q\in\Proj_0(\mathcal A)$ as well.
\end{lemma}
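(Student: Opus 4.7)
The plan is to invoke the Kaplansky parallelogram identity and then the fact that the class of finite projections is closed under addition of orthogonal pairs.

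First I would write $p\vee q = q + \bigl((p\vee q)-q\bigr)$, which decomposes $p\vee q$ into the orthogonal sum of two projections. The Kaplansky formula in a von Neumann algebra states
$$(p\vee q)-q\;\sim\;p-(p\wedge q),$$
which is standard and can be cited from Takesaki. Since $p-(p\wedge q)\le p$ and $p\in\Proj_0(\mathcal A)$, and since any subprojection of a finite projection is finite (immediate from the definition, because Murray--von Neumann equivalence of a proper subprojection of $p-(p\wedge q)$ with itself would, by adding $p\wedge q$, yield equivalence of a proper subprojection of $p$ with $p$), the projection $p-(p\wedge q)$ is finite. Murray--von Neumann equivalence preserves finiteness, hence $(p\vee q)-q$ is finite.

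Next I would need that if $e,f\in\Proj_0(\mathcal A)$ are orthogonal, then $e+f\in\Proj_0(\mathcal A)$. This is the classical fact that the sum of two orthogonal finite projections is finite; the proof uses the comparison theorem to reduce to the case where $e+f$ is equivalent to a subprojection of itself and then derives a contradiction by cutting by a central projection and exhibiting an improper equivalence of $e$ or $f$ with a proper subprojection. Applied to $e=q$ and $f=(p\vee q)-q$, this gives $p\vee q\in\Proj_0(\mathcal A)$.

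The only nontrivial ingredient is the parallelogram identity together with stability of finiteness under orthogonal addition; both are classical results about von Neumann algebras that I would simply quote from Takesaki (Proposition V.1.6 and Proposition V.1.3, respectively), since this lemma is a standard auxiliary fact in the Breuer setting and the paper is explicitly listing such facts. Thus no step is really an obstacle; the only care needed is to state the parallelogram identity correctly and to note that the finiteness of a subprojection follows directly from the definition of a finite projection.
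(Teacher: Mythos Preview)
Your proposal is correct and aligns with the paper: the paper's own proof of this lemma is simply a citation to Takesaki, Proposition~V.1.6 (and the analogous statement in Blackadar), with no argument given. You have unpacked the standard proof behind that citation---the Kaplansky parallelogram law together with closure of $\Proj_0(\mathcal A)$ under orthogonal sums---and you end up citing the very same reference, so there is no real divergence in approach.
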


\begin{proof} The proof can be
found in \cite[Proposition V.1.6]{Takesaki} or \cite[III.1.1.3]{CrnaGuja}.
\end{proof}

\begin{lemma}\label{VN5} Let $T\in \mathcal A$ and let $T=U|T|$ be its polar decomposition. Then both $U$, $|T|\in \mathcal A$.
\end{lemma}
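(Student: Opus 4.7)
The plan is the standard double-commutant argument for polar decompositions. First I would handle $|T|$: since $T \in \mathcal{A}$ and $\mathcal{A}$ is a $C^*$-algebra, the positive element $T^*T$ lies in $\mathcal{A}$, and then $|T| = (T^*T)^{1/2}$ is obtained from $T^*T$ via continuous functional calculus, so $|T| \in \mathcal{A}$. (No von Neumann structure is needed here, only that $\mathcal{A}$ is $C^*$-closed.)

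The substantial step is $U \in \mathcal{A}$, and this is where I would use that $\mathcal{A}$ is a von Neumann algebra by invoking the double commutant theorem $\mathcal{A} = \mathcal{A}''$. So it suffices to show that $U$ commutes with every unitary $V \in \mathcal{A}'$. Fix such a $V$. Since $T$ and $|T|$ both belong to $\mathcal{A}$, they commute with $V$, hence $VTV^* = T$ and $V|T|V^* = |T|$. Therefore
\[
T = VTV^* = (VUV^*)(V|T|V^*) = (VUV^*)|T|.
\]
So $(VUV^*, |T|)$ is another factorization of $T$ as a partial isometry times $|T|$.

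To conclude $VUV^* = U$ via uniqueness of the polar decomposition, I need to check that $VUV^*$ has the correct initial space, namely $\overline{\ran |T|} = (\ker |T|)^\perp$. The initial space of $VUV^*$ is $V(\overline{\ran |T|})$, and since $V$ commutes with $|T|$ and is a unitary bijection, it maps $\ran |T|$ onto itself and hence preserves its closure. Thus the initial spaces agree, and uniqueness of polar decomposition gives $VUV^* = U$, i.e.\ $VU = UV$. Since the unitaries in $\mathcal{A}'$ span $\mathcal{A}'$ by spectral theorem, $U$ commutes with all of $\mathcal{A}'$, so $U \in \mathcal{A}'' = \mathcal{A}$.

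I do not expect any real obstacle here; the only point that requires care is verifying that the conjugated partial isometry $VUV^*$ has the right initial space so that uniqueness of the polar decomposition applies. Everything else is standard $C^*$/von Neumann algebra bookkeeping.
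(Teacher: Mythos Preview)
Your argument is correct and is precisely the classical double-commutant proof of this fact. The paper itself does not give a proof but simply cites Takesaki, Proposition~II.3.14, and Blackadar, I.9.2.1(iii); the argument you have written is exactly the one found in those references, so your proposal matches the intended proof.
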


\begin{proof}The proof can be found in \cite[Proposition II.3.14]{Takesaki} or \cite[I.9.2.1(iii)]{CrnaGuja}.
\end{proof}

\begin{lemma}\label{VN6} Let $0\le T\in\mathcal A$. Then all spectral projections of $T$ also belongs to $\mathcal A$.
\end{lemma}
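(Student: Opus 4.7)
The plan is to prove this by the standard double commutant / SOT-closure argument, which is really the cleanest way to package what is essentially the spectral theorem.

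First I would note that, as a von Neumann algebra, $\mathcal A$ is closed in the strong (equivalently, weak) operator topology. By the continuous functional calculus, for every continuous function $f$ on $\sigma(T)$ the operator $f(T)$ lies in the unital $C^*$-subalgebra generated by $T$, hence in $\mathcal A$. Given a Borel set $B\subseteq\sigma(T)\subseteq[0,\infty)$, the spectral projection $E_T(B)$ can be realised as an SOT-limit of $f_n(T)$ for a uniformly bounded sequence $f_n$ of continuous functions converging pointwise to $\chi_B$ on $\sigma(T)$ (via, e.g., Lusin-type approximation together with the dominated convergence theorem applied to the spectral integral). Since $f_n(T)\in\mathcal A$ for every $n$ and $\mathcal A$ is SOT-closed, the limit $E_T(B)$ belongs to $\mathcal A$.

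Alternatively, and perhaps more elegantly, I would invoke von Neumann's bicommutant theorem: $\mathcal A=\mathcal A''$. For any $S\in\mathcal A'$ we have $ST=TS$, and the spectral theorem guarantees that every operator commuting with $T$ commutes with every bounded Borel function of $T$, in particular with every spectral projection of $T$. Thus $E_T(B)\in\mathcal A''=\mathcal A$.

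There is no real obstacle here; the lemma is a pure von Neumann algebra fact, and the only subtle point is justifying that $\chi_B$ can be reached by a bounded SOT-approximation from $C(\sigma(T))$. That step is standard, so I would simply cite \cite{Takesaki} (analogously to how the surrounding Lemmata \ref{VN1}--\ref{VN5} are cited) rather than reproving it from scratch.
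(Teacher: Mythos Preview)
Your proposal is correct, and in its bottom line it matches exactly what the paper does: the paper's ``proof'' is simply a citation of a standard reference (\cite[I.9.2.1(iv)]{CrnaGuja}), just as you suggest citing \cite{Takesaki}. Your additional SOT-closure and bicommutant sketches are both standard and valid justifications, but they go beyond what the paper actually provides.
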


\begin{proof} The proof can be found in \cite[I.9.2.1(iv)]{CrnaGuja}.
\end{proof}

\begin{lemma}\label{BreuerAtkinson} Let $\mathcal A$ be a von Neumann algebra which is not of finite type.
Then the operator $T\in\mathcal A$ is $\mathcal A$-Fredholm if and only if $T$ is invertible modulo
$\mathfrak m$.
\end{lemma}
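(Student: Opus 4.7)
The plan is to reduce both directions to a single key assertion: \emph{for some $\e>0$, the spectral projection $F_\e:=\chi_{[0,\e]}(|T|)$ is finite}. I shall use the polar decomposition $T=U|T|$ in $\mathcal A$ (Lemma \ref{VN5}) and the fact that spectral projections of positive elements of $\mathcal A$ remain in $\mathcal A$ (Lemma \ref{VN6}). Once $F_\e$ is finite, the bounded Borel function $g(\lambda):=\lambda^{-1}\chi_{(\e,\infty)}(\lambda)$ yields $g(|T|)\in\mathcal A$, and I take $S:=g(|T|)U^*\in\mathcal A$. Using $U^*U=I-P_{\ker T}$ (which commutes with $|T|$ and satisfies $P_{\ker T}\le F_\e$), a direct computation gives $I-ST=F_\e$ and $I-TS=P_{\ker T^*}+U(F_\e-P_{\ker T})U^*$; each remainder is an orthogonal sum of finite projections, hence finite by Lemma \ref{VN4}, and so lies in $\mathfrak m$.

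For the direction $(\Rightarrow)$, assume $T$ is $\mathcal A$-Fredholm with witness $E\in\Proj_0(\mathcal A)$ satisfying $\ran(I-E)\subseteq\ran T$. Put $A:=(I-E)T$; the Fredholm condition forces $\ran A=(I-E)H$, which is closed, so the open mapping theorem yields $c>0$ with $\|Ax\|\ge c\|x\|$ on $(\ker A)^\perp$, equivalently $T^*T\ge A^*A\ge c^2(I-P_{\ker A})$. The intermediate claim is that $P_{\ker A}$ is finite: decompose $\ker A=\ker T\oplus N$ with $N:=\ker A\cap(\ker T)^\perp$, and polar-decompose $TP_N\in\mathcal A$ as $TP_N=V|TP_N|$; one computes $V^*V=P_N$ and $VV^*\le E$ (since $\ran(TP_N)\subseteq EH$), so $P_N\sim VV^*$ is finite, and Lemma \ref{VN4} gives $P_{\ker A}=P_{\ker T}+P_N$ finite. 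For $\e<c$, any unit $\xi\in F_\e H$ satisfies $\|(I-P_{\ker A})\xi\|\le\e/c<1$ by the operator inequality; the polar decomposition of $P_{\ker A}F_\e\in\mathcal A$ then has initial projection $F_\e$ (its kernel reduces to $(I-F_\e)H$ by the bound just obtained) and final projection dominated by $P_{\ker A}$, exhibiting $F_\e$ as Murray--von Neumann equivalent to a subprojection of the finite $P_{\ker A}$.

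For the direction $(\Leftarrow)$, assume $ST-I,TS-I\in\mathfrak m$. I first show $P_{\ker T}\in\Proj_0(\mathcal A)$: writing $m':=I-ST\in\mathfrak m$ and approximating $m'=\hat m+r$ with $\hat m\in\mathfrak m_0$ (Lemma \ref{VN3}) and $\|r\|<1/2$, the identity $m'x=x$ on $\ker T$ gives $\|\hat m x\|>\|x\|/2$ there, and polar decomposition of $\hat m P_{\ker T}$ yields $P_{\ker T}\sim VV^*\le P_{\overline{\ran\hat m}}\in\Proj_0(\mathcal A)$; applying the same argument to $T^*$ gives $P_{\ker T^*}$ finite. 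Thus $I-U^*U,I-UU^*\in\mathfrak m$, so $\pi(U)$ is unitary in the $C^*$-quotient $\mathcal A/\mathfrak m$, and $\pi(|T|)=\pi(U)^*\pi(T)$ is invertible with some norm $M:=\|\pi(|T|)^{-1}\|<\infty$. For any $\e<1/M$ the projection $F_\e$ must be finite, for otherwise --- noting that every projection in $\mathfrak m$ is necessarily finite (standard: a close approximation by a finite element $m\in\mathfrak m_0$ yields $\|(I-P_{\overline{\ran m}})p\|<1$, and polar decomposition of $P_{\overline{\ran m}}p$ shows $p$ subequivalent to a finite projection) --- $\pi(F_\e)\ne 0$ is a projection of norm $1$, yet $1=\|\pi(F_\e)\|\le M\||T|F_\e\|\le M\e<1$, a contradiction. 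With $F_\e$ finite, the projection $E':=P_{\ker T^*}+U(F_\e-P_{\ker T})U^*$ is finite, and $(I-E')H=U(I-F_\e)H\subseteq U(\ran|T|)=\ran T$ follows because $|T|$ is bounded below by $\e$, hence invertible, on $(I-F_\e)H$; this gives Breuer's condition (ii).

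The main obstacle is the finiteness of $F_\e$. In the forward direction the delicate step is the polar-decomposition argument bounding $P_N$ by $E$, followed by the Murray--von Neumann comparison transferring finiteness from $P_{\ker A}$ to $F_\e$ via the norm bound $\|(I-P_{\ker A})\xi\|\le\e/c$. In the reverse direction the Calkin-type norm estimate is conceptually clean but relies on the ancillary (and standard) fact that every projection in $\mathfrak m$ is finite.
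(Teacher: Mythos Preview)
Your argument is correct and complete. Note, however, that the paper does not actually prove this lemma: its entire proof is the citation ``This is \cite[Theorem~1]{Breuer69}.'' You have instead supplied a self-contained argument using only the von Neumann algebra facts already quoted in the surrounding text (Lemmata~\ref{VN3}--\ref{VN6}). Your key device --- reducing both implications to the finiteness of a low spectral projection $F_\e=\chi_{[0,\e]}(|T|)$ and then producing the parametrix $S=g(|T|)U^*$ by Borel functional calculus --- is the natural one, and the two delicate steps (the Murray--von Neumann comparison $F_\e\precsim P_{\ker A}$ obtained from the norm bound $\|(I-P_{\ker A})\xi\|\le\e/c$ together with the polar decomposition of $P_{\ker A}F_\e$; and, in the converse, the Calkin-type contradiction $1=\|\pi(F_\e)\|\le M\e<1$) are handled correctly. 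The only point you invoke tacitly in the forward direction is that $P_{\ker T^*}$ is finite; this is built into Definition~\ref{Brojerova definicija} (indeed $P_{\ker T^*}\le E$), so there is no gap. What your approach buys over the paper's bare citation is that the lemma becomes internal to the paper's own toolkit rather than an external black box.
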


\begin{proof}This is \cite[Theorem 1]{Breuer69}.
\end{proof}

\begin{lemma} Let $\mathcal A$ be a von Neumann algebra which is not of finite type, and let $T$ be $\mathcal
A$-Fredholm operator. Then $|T|$ is $\mathcal A$-Fredholm as well.
\end{lemma}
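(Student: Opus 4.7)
The plan is to invoke Lemma~\ref{BreuerAtkinson} in both directions: since $\mathcal A$ is not of finite type, $|T|$ is $\mathcal A$-Fredholm if and only if $|T|$ is invertible modulo $\mathfrak m$. So starting from the modular invertibility of $T$---guaranteed by the same lemma applied to $T$---I will use the polar decomposition to transfer it to $|T|$.

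By Lemma~\ref{VN5}, I have $T=U|T|$ with $U,|T|\in\mathcal A$. The initial projection satisfies $U^*U=P_{(\ker T)^\perp}=P_{\overline{\ran|T|}}$, which absorbs $|T|$, yielding the identity $|T|=U^*T$. Moreover $I-U^*U=P_{\ker T}\in\Proj_0(\mathcal A)\subseteq\mathfrak m$ (using Lemma~\ref{VN3}) because $T$ is $\mathcal A$-Fredholm.

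Now let $B_1,B_2\in\mathcal A$ and $F_1,F_2\in\mathfrak m$ satisfy $TB_1=I+F_1$ and $B_2T=I+F_2$ (Lemma~\ref{BreuerAtkinson} applied to $T$). A direct computation gives
$$|T|(B_1U)=U^*TB_1U=U^*(I+F_1)U=(I-P_{\ker T})+U^*F_1U\in I+\mathfrak m,$$
since $\mathfrak m$ is a selfadjoint two-sided ideal, and
$$(B_2U)|T|=B_2(U|T|)=B_2T=I+F_2\in I+\mathfrak m.$$
Hence $|T|$ is both left and right invertible modulo $\mathfrak m$, so invertible in the quotient $\mathcal A/\mathfrak m$; applying Lemma~\ref{BreuerAtkinson} to $|T|$ then delivers $\mathcal A$-Fredholmness.

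I expect no serious obstacle: the argument is polar-decomposition bookkeeping. The only point genuinely using the hypothesis is that $P_{\ker T}\in\mathfrak m$, which is exactly the finiteness condition in Definition~\ref{Brojerova definicija}~$(i)$ combined with Lemma~\ref{VN3}.
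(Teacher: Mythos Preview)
Your proof is correct and follows essentially the same route as the paper: apply Lemma~\ref{BreuerAtkinson} to $T$, use the polar decomposition $T=U|T|$ (with $U,|T|\in\mathcal A$ by Lemma~\ref{VN5}) to exhibit one-sided $\mathfrak m$-inverses for $|T|$, and then apply Lemma~\ref{BreuerAtkinson} in the reverse direction. The only cosmetic difference is that the paper writes the inverses as $SV$ and $V^*S^*$ (so that the identities $SV|T|=ST$ and $|T|V^*S^*=(ST)^*$ land directly in $1+\mathfrak m$ without producing a $U^*U$ term), whereas your choice $B_1U$, $B_2U$ requires the extra---but entirely legitimate---observation that $I-U^*U=P_{\ker T}\in\mathfrak m$.
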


\begin{proof} Let $T=V|T|$ be the polar decomposition of $T$. By Lemma \ref{VN5}, $|T|\in\mathcal A$, and by
Lemma \ref{BreuerAtkinson} there is $S\in\mathcal A$ such that $ST$, $TS\in 1+\mathfrak m$, i.e.\ $S$ is
$\mathfrak m$-inverse of $T$. Then, as it easy to see, $SV$ is a left $\mathfrak m$-inverse of $|T|$, whereas
$V^*S^*$ is a right inverse of $|T|$. Therefore, $|T|$ is invertible modulo $\mathfrak m$, and again by Lemma
\ref{BreuerAtkinson}, $|T|$ is $\mathcal A$-Fredholm.
\end{proof}

\begin{lemma}\label{spektralna mera Fr}
Let $T\ge0$ be an $\mathcal A$-Fredholm operator, and let $E_T$ be its spectral measure. Then, there is
$\delta>0$ such that $E_T([0,\delta))\in\Proj_0(\mathcal A)$.
\end{lemma}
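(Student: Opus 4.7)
The plan is to use Lemma \ref{BreuerAtkinson} to turn the hypothesis into invertibility of $T$ modulo $\mathfrak{m}$, and then exploit that on the range of $P_\delta=E_T([0,\delta))$ the operator $T$ has norm at most $\delta$, so a parametrix almost sends $P_\delta$ into $\mathfrak m$. Concretely: by Lemma \ref{BreuerAtkinson} pick $S\in\mathcal A$ and $K\in\mathfrak m$ with $ST=I+K$. By Lemma \ref{VN6} the projection $P_\delta$ belongs to $\mathcal A$, and by functional calculus $\|TP_\delta\|\le\delta$. From $ST=I+K$ we obtain
$$P_\delta = STP_\delta - KP_\delta,$$
and compressing by $P_\delta$ on the left yields
$$P_\delta + P_\delta K P_\delta = P_\delta S T P_\delta,$$
where the right side has norm $\le \|S\|\delta$.

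Next I would choose $\delta>0$ so small that $\|S\|\delta<1$ and read the above identity inside the corner $C^*$-algebra $P_\delta\mathcal A P_\delta$ (whose unit is $P_\delta$). Then $P_\delta S T P_\delta$ has norm $<1$ in this corner, hence the element $-P_\delta K P_\delta = P_\delta - P_\delta S T P_\delta$ is invertible in $P_\delta\mathcal A P_\delta$ by a Neumann series. Since $\mathfrak m$ is a two-sided ideal of $\mathcal A$ (Lemma \ref{VN3}), $P_\delta K P_\delta \in \mathfrak m$, and multiplying by its corner-inverse $C\in P_\delta\mathcal A P_\delta$ gives
$$P_\delta = -C\,(P_\delta K P_\delta)\in\mathfrak m.$$

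The final step is to promote "$P_\delta$ is a projection lying in $\mathfrak m$" to "$P_\delta\in\Proj_0(\mathcal A)$". For this I would approximate $P_\delta$ in norm by some $T_0\in\mathfrak m_0$ with $\|P_\delta-T_0\|<1/2$. Then
$$\|P_\delta T_0 P_\delta - P_\delta\| = \|P_\delta(T_0-P_\delta)P_\delta\|<1/2,$$
so $P_\delta T_0 P_\delta$ is invertible in the corner $P_\delta\mathcal A P_\delta$, and therefore acts as a bijection of $P_\delta H$ onto itself; its range projection in $\mathcal A$ is exactly $P_\delta$. On the other hand, a routine polar-decomposition argument (using Lemma \ref{VN5}) shows that $\mathfrak m_0$ is a two-sided ideal, because for any $A\in\mathcal A$ and $B\in\mathfrak m_0$ one has range projection of $AB$ Murray--von Neumann equivalent to the support of $AB$, which is $\le$ support of $B$, hence finite. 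Consequently $P_\delta T_0 P_\delta\in\mathfrak m_0$, i.e.\ its range projection is finite. But that range projection is $P_\delta$, so $P_\delta\in\Proj_0(\mathcal A)$.

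The main obstacle, or at least the only delicate point, is the last step: converting a projection in the norm-closed ideal $\mathfrak m$ into an element of $\Proj_0(\mathcal A)$. The bridge is the observation that a corner-invertible element of $\mathfrak m$ whose range projection happens to coincide with $P_\delta$ forces $P_\delta$ itself to be finite; this relies on the fact that $\mathfrak m_0$ (not just its closure $\mathfrak m$) is already a two-sided ideal, for which the polar-decomposition argument above is needed. Everything else reduces to Neumann-series calculus in a corner algebra.
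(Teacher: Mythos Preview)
Your argument is correct and takes a genuinely different route from the paper's. The paper works directly from Definition~\ref{Brojerova definicija}: it picks the finite-codimension $L\subseteq\ran T$, pulls it back to $L_1=T|_{\ker T^\perp}^{-1}(L)$, uses the open mapping theorem to obtain $\|T\xi\|>\delta\|\xi\|$ on $L_1$, and then shows $L_1^\perp$ is finite by a comparison argument involving $(\ker T)^\perp\ominus L_1$ and polar decomposition; the conclusion follows from $E_T([0,\delta))\le P_{L_1^\perp}$. Your approach is more algebraic: you invoke the Atkinson-type Lemma~\ref{BreuerAtkinson} to get a parametrix, use a Neumann-series trick in the corner $P_\delta\mathcal A P_\delta$ to force $P_\delta\in\mathfrak m$, and then prove the general fact that any projection in $\mathfrak m$ lies in $\Proj_0(\mathcal A)$. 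Your route has the merit of isolating this last fact as a reusable lemma and of avoiding the geometric subspace juggling; the paper's route is more self-contained in that it does not lean on Breuer's Atkinson theorem. One minor caveat: Lemma~\ref{BreuerAtkinson} as stated assumes $\mathcal A$ is not of finite type, which is not hypothesized in the present lemma; but in the finite-type case every projection is finite and the conclusion is trivial, so this is harmless.
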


\begin{proof} By Definition \ref{Brojerova definicija}, there is a closed subspace $L\subseteq\ran T$, which
complement is finite (i.e.\ $I-P_L\in\Proj_0(\mathcal A)$). Consider the restriction $T_1=T|_{\ker T^\bot}$,
and define $L_1=T_1^{-1}(L)$. Since both $L_1$ and $L$ are closed, by open mapping theorem $T_1$ realizes a
topological isomorphism between $L_1$ and $L$. Hence, there is $\delta>0$ such that for all $\xi\in L_1$
there holds $||T\xi||>\delta||\xi||$. However, for $\xi\in\ran E_T([0,\delta))$ we have
$||T\xi||\le\delta||\xi||$, i.e.\ $L_1\cap\ran E_T([0,\delta))=\{0\}$. Thus, it suffices to prove that
$L_1^\bot$ is finite.

Consider $L_2=(\ker T)^\bot\ominus L_1$. Obviously, $T(L_2)\cap L=\{0\}$, implying that $(I-P_L)T$ maps $L_2$
injectively to a subspace of $L^\bot$. By Lemma \ref{VN5} applied to $(I-P_L)TP_{L_2}$ there is a partial
isometry $W\in\mathcal A$ that connects $L_2$ and the closure $L_3$ of $(I-P_L)T(L_2)$ which is a subspace of
$L^\bot$. Since $L^\bot$ is finite, both $L_3\subseteq L^\bot$ and $L_2\sim L_3$ are also finite. Since by
Definition \ref{Brojerova definicija} $\ker T$ is finite as well, it follows that $L_1^\bot$ is finite. The
proof is complete, because $E_T([0,\delta))\le P_{L_1^\bot}$.
\end{proof}

\begin{corollary}\label{AS}Let $\mathcal A$ be a properly infinite von Neumann algebra acting on a Hilbert space
$H$, and let $\mathfrak m$ be defined as above. Then the couple $(\mathcal A,\mathfrak m)$ satisfies the
conditions $(i)-(iii)$ of Definition \ref{definicija konacnih}.

Moreover, abstract Fredholm elements are generalized Fredholm operators in the sense of Breuer.
\end{corollary}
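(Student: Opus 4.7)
The plan is to verify the three conditions of Definition \ref{definicija konacnih} for $(\mathcal A,\mathfrak m)$, and then to identify abstract Fredholm elements with Breuer $\mathcal A$-Fredholm operators by stringing together two Atkinson-type statements already available. Condition $(i)$ is immediate from Lemma \ref{VN3}. For $(ii)$, I would take as approximate unit the net $\Proj_0(\mathcal A)$ of all finite projections, which is upward directed since $p\vee q\in\Proj_0(\mathcal A)$ by Lemma \ref{VN4}. For any $f\in\mathfrak m_0$, the projection $r$ onto $\overline{\ran f}$ lies in $\Proj_0(\mathcal A)$ and every $p\ge r$ satisfies $pf=f$; a routine $\e/3$ argument using norm-density of $\mathfrak m_0$ in $\mathfrak m$ and $||p||\le 1$ extends this to all of $\mathfrak m$.

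For $(iii)$, the first step is to check that every projection $P\in\mathfrak m$ is finite. Approximating $P$ in norm by selfadjoint $T_n\in\mathfrak m_0$, the compressions $PT_nP\in\mathfrak m_0$ also converge to $P$; by Lemma \ref{VN6} the spectral projection $E_n:=E_{PT_nP}([1/2,\infty))$ lies in $\mathcal A$, is dominated by the support of $PT_nP$, and is therefore finite. For $n$ large $||P-E_n||<1$, so $P\sim E_n$ by Lemma \ref{p-q manje 1}, showing $P$ is finite. Given projections $p,q\in\mathfrak m$, set $f:=p\vee q$, a finite projection by Lemma \ref{VN4}. The comparison theorem supplies a central projection $z$ with $zq\preceq z(1-f)$ and $(1-z)(1-f)\preceq(1-z)q$. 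In the second alternative $(1-z)(1-f)$ is finite, $(1-z)f$ is finite, so the orthogonal sum $1-z=(1-z)f+(1-z)(1-f)$ is a finite projection (Lemma \ref{VN4} once more). Proper infiniteness forces $1-z=0$, so $q\preceq 1-f$; a partial isometry $v\in\mathcal A$ with $vv^*=q$ and $v^*v\le 1-f\le 1-p$ then provides the required $v$, since $v^*v\le 1-p$ gives $v^*v\bot p$.

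For the final statement, Proposition \ref{OurAtkinson} characterizes abstract Fredholm type by invertibility modulo $\mathfrak m$, while Lemma \ref{BreuerAtkinson} gives exactly the same characterization for Breuer $\mathcal A$-Fredholm operators; combining them identifies the two notions. The main obstacle is $(iii)$: both the reduction of an arbitrary projection of $\mathfrak m$ to a finite projection, and the appeal to the comparison theorem together with proper infiniteness to place $q$ under $1-f$, require genuine von Neumann algebra input rather than the formal machinery developed in Section~2.
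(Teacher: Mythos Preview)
Your argument is correct, but it diverges from the paper's in two places worth noting.

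For $(ii)$, the paper works directly with an arbitrary $T\in\mathfrak m$: taking the polar decomposition $T=U|T|$ (Lemma \ref{VN5}) and the spectral projection $P_\e=E_{|T|}((\e,\infty))$ (Lemma \ref{VN6}), one has $P_\e\in\Proj_0(\mathcal A)$ and $\|T(I-P)\|\le\e$ for every $P\ge P_\e$. Your density argument from $\mathfrak m_0$ is an equally valid alternative and avoids the spectral calculus, at the cost of needing to know that $\mathfrak m_0$ is itself a two-sided $*$-ideal (so that the self-adjoint approximants $PT_nP$ really lie in $\mathfrak m_0$); this is true but not quite stated in Lemma \ref{VN3}.

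For $(iii)$, the paper takes a much shorter route: it quotes \cite[Lemma~8]{Breuer69} to the effect that in a properly infinite von Neumann algebra $I-P\sim I$ for every finite projection $P$; an isometry $V$ with $V^*V=I$ and $VV^*=I-P$ then makes $VQ$ the required partial isometry. Your comparison-theorem argument reaches the same conclusion without that external citation, and along the way you explicitly verify that every projection in $\mathfrak m$ is finite, a point the paper passes over silently when it writes ``for $P$, $Q\in\Proj_0(\mathcal A)$''. So your route is more self-contained, the paper's is shorter.

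One omission: the paper goes on to show that the abstract index and Breuer's index coincide, using Lemma \ref{spektralna mera Fr} to produce a specific pair $(P,Q)$ with $P=P_{\ker T}\oplus P_{N_1}$, $Q=P_{\ker T^*}\oplus P_{N_2}$, and $P_{N_1}\sim P_{N_2}$. The corollary as stated does not literally demand this, and your Atkinson-matching argument does establish that the two classes of operators agree; but if the intended reading of ``are generalized Fredholm operators in the sense of Breuer'' includes equality of indices, you would still need that last step.
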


\begin{proof} By Lemma \ref{VN3}, $\mathfrak m$ is an $*$-ideal which proves $(i)$ in Definition \ref{definicija
konacnih}.

Let us prove that the set $\Proj_0(\mathcal A)$ is an approximate unit. First, this is a directed set.
Indeed, let $p$, $q\in\Proj_0(\mathcal A)$. Since projection in a von Neumann algebra makes a complete
lattice (Lemma \ref{VN1}), the projection $p\vee q\in \mathcal A$. By Lemma \ref{VN4} $p\vee
q\in\Proj_0(\mathcal A)$ as well. Let $T\in\mathfrak m$, and let $T=U|T|$ be its polar decomposition. By
Lemma \ref{VN5} $U$, $|T|\in\mathcal A$, as well as any spectral projection of $|T|$ also belongs to
$\mathcal A$. Pick $\e>0$, and denote $P_\e=E_{|T|}(\e,+\infty)$, where $E_{|T|}$ stands for the spectral
measure of $|T|$. Then $P_\e\le P_{\overline{\ran T}}$ implying $P_\e\in\Proj_0(\mathcal A)$. Also,
$||\,|T|(I-P_\e)||\le\e$, since the function $\lambda\mapsto\lambda(1-\chi_{(\e,+\infty)})$ is bounded by
$\e$. For all $P\ge P_\e$ we have $I-P\le I-P_\e$ and hence, $||U|T|(I-P)||=||U|T|(I-P_\e)(I-P)||\le\e$. We
proved $(ii)$ in Definition \ref{definicija konacnih}.

Since $\mathcal A$ is properly infinite, by \cite[Lemma 8]{Breuer69}, for $P$, $Q\in\Proj_0(\mathcal A)$,
there holds $I-P\sim I$, i.e.\ there is a a partial isometry $V$ such that $V^*V=I$, and $VV^*=I-P$. Then
$VQ$ is a partial isometry, required in the property $(iii)$ od Definition \ref{definicija konacnih}.

Using Proposition \ref{OurAtkinson} and Lemma \ref{BreuerAtkinson} we see that abstract Fredholm operators
with respect to $(\mathcal A,\mathfrak m)$ coincide with $\mathcal A$-Fredholm operators in the sense of
Breuer. Let us prove that their indices are equal.

Let $T$ be Fredholm operator. By Lemma \ref{spektralna mera Fr}, there is a $\delta>0$ such that
$P=E_{|T|}[0,\delta)\in\Proj_0(\mathcal A)$. The operator $|T|$ is bounded below on the space
$E_{|T|}[\delta,\infty)H=(I-P)H$, hence $T=V|T|$ is also bounded below on $(I-P)H$. Let $I-Q$ be the
projection on the closure of $T(I-P)H$. Then, obviously, $T$ is invertible up to $(P,Q)$.

We have $P=P_{\ker T}\oplus E_{|T|}(0,\delta)=P_{\ker T}\oplus P_1$, and $N_1=P_1H$ is invariant for $|T|$.
Since $V$ acts as a partial isometry on $N$, $T$ maps $N_1$ some subspace $N_2\sim N_1$. Thus, $P(H)=\ker
T\oplus P_{N_1}$ and $Q=\ker T^*\oplus P_{N_2}$, and $P_{N_1}\sim P_{N_2}$ and the result follows.
\end{proof}

\begin{remark} The condition $(iii)$ in Definition \ref{definicija konacnih} is not substantial. If it is suppressed, we
have not a semigroup, but a conditional addition. However, we can also form a corresponding $K$-group,
starting with a free group with $\Proj_0(\mathcal A)$ as generators and then using appropriate
identifications. Obtained group will be isomorphic to $I(\mathcal A)$, see \cite[Section 2]{Breuer68}
\end{remark}

\subsection*{Fredholm operators on the standard Hilbert module}

The last corollary is devoted to Fredholm theory on the standard Hilbert $C^*$-module $l^2(\mathcal B)$ over
a unital $C^*$-algebra $\mathcal B$. We list some definitions and known facts. For references and further
details the reader is referred to \cite{TrMa}.

Let $\mathcal B$ be a unital $C^*$-algebra. We define the right Hilbert module $l^2(\mathcal B)$ as
$$l^2(\mathcal B)=\{(a_n)_{n=1}^{\infty}\:|\:a_n\in \mathcal B,\sum_n \skp{a_n}{a_n}\mbox{
converges in norm}\}$$
equipped with the right action of $\mathcal B$ $(a_n)b=(a_nb)$ and with the inner product
$\skp{\cdot}{\cdot}:l^2(\mathcal B)\times l^2(\mathcal B)\to\mathcal B$ given by
$$\skp{(a_n)}{(b_n)}=\sum_{n=1}^{\infty}a_n^*b_n.$$

Let $B(l^2(\mathcal B))$ denote the set of all bounded operators on $l^2(\mathcal B)$, and let
$B^a(l^2(\mathcal B))$ denote the algebra of all operators from $B(l^2(\mathcal B))$ that have the adjoint.

Let $C_0(l^2(\mathcal B))$ denote the algebra generated by all operators of the form
$$\theta_{x,y}(z)=x\skp yz.$$
Its closure we will denote by $C(l^2(\mathcal B))$. Although such operators might not map bounded into
relatively compact sets it is common to call them compact operators.

Next, we quote definitions and statements given by Mingo \cite{Mingo} and Mischenko and Fomenko
\cite{Mischenko}.

\begin{definition}\label{Mingo definition} $T\in B^a(l^2(\mathcal B))$ is Fredholm in the sense of Mingo, if it is invertible modulo compact
operators. \cite[\S1.1. Definition]{Mingo}.
\end{definition}

\begin{proposition}\label{Mingo characterization} $T$ is Fredholm in the sense of Mingo
if and only if there is a compact perturbation $S$ of $T$ (i.e.\ $T-S$ is compact) such that $\ker S$ and
$\ker S^*$ are finitely generated projective. The difference
\begin{equation}\label{Mindex}
[\ker S]-[\ker S^*]\in K_0(\mathcal B)
\end{equation}
does not depend on the choice of $S$ \cite[\S1.4.]{Mingo}.
\end{proposition}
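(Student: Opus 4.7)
The plan is to view the statement as a consequence of the abstract Fredholm theory established in Section~2 (specialized to $\mathcal A=B^a(l^2(\mathcal B))$ and $\mathcal F=C(l^2(\mathcal B))$), together with the standard Hilbert $C^*$-module dictionary between compact projections on $l^2(\mathcal B)$ and projections whose range is a finitely generated projective submodule. A preliminary step is to verify that this $(\mathcal A,\mathcal F)$ satisfies Definition~\ref{definicija konacnih}: the selfadjoint ideal property for $\mathcal F$ is standard, the rank-one operators $\theta_{x,y}$ assemble into an approximate unit of projections for~(ii), and~(iii) follows from $l^2(\mathcal B)\oplus l^2(\mathcal B)\cong l^2(\mathcal B)$.

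For the ``only if'' direction, suppose $T$ is invertible modulo compacts. By Proposition~\ref{OurAtkinson} $T$ is of Fredholm type: there exist compact projections $p$, $q$ with $(1-q)T(1-p)$ invertible in the corner algebra. Set $S=(1-q)T(1-p)$. Since $\mathcal F$ is an ideal, $T-S=qT+Tp-qTp\in\mathcal F$, so $S$ is a compact perturbation of $T$. A short computation with the $(p,q)$-inverse $b$ of $T$ yields $\ker S=\ran p$ and $\ker S^{*}=\ran q$, and these are finitely generated projective by the dictionary above.

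For the ``if'' direction, let $S$ be a compact perturbation of $T$ with finitely generated projective kernel and cokernel, and let $p$, $q$ be the orthogonal projections onto $\ker S$ and $\ker S^{*}$; both lie in $\mathcal F$ by the dictionary. The submodules $\ran(1-p)$ and $\ran(1-q)$ are orthogonally complemented (since $p$, $q$ are adjointable), and $S$ restricts to a bounded bijection between them; the open mapping theorem for Hilbert modules then forces $(1-q)S(1-p)$ to be invertible in the corner algebra. Thus $S$ is of Fredholm type, hence so is $T$ by Proposition~\ref{1 plus f je u F}(a), and Proposition~\ref{OurAtkinson} supplies the compact-mod invertibility.

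For the last clause, I would identify the class $[\ker S]-[\ker S^{*}]\in K_{0}(\mathcal B)$ with the abstract index $\ind T=[p]-[q]\in K(\mathcal F)$ via the natural map $K(\mathcal F)\to K_{0}(\mathcal B)$ sending a compact projection to the class of its range as a finitely generated projective $\mathcal B$-module. Well-definedness of the abstract index is Proposition~\ref{korektnost indeksa}, and its invariance under compact perturbations is Proposition~\ref{1 plus f je u F}(a); together they give that $[\ker S]-[\ker S^{*}]$ is independent of the choice of $S$. The main obstacle is the Hilbert $C^*$-module identification \emph{compact projection $\Leftrightarrow$ finitely generated projective range}, which sits outside the abstract framework developed in the paper and must be imported from the Hilbert module literature (e.g.~\cite{TrMa}); once it is available, every remaining step is a direct application of the Section~2 machinery.
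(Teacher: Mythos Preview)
The paper does not actually prove this proposition: it is quoted verbatim from Mingo \cite[\S1.4]{Mingo} and used later as a black box in the Hilbert-module corollary. So there is no ``paper's own proof'' to compare against; you are supplying an argument where the authors simply cite one.

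That said, your strategy of deriving Mingo's characterization from the abstract machinery of Section~2 (after verifying Definition~\ref{definicija konacnih} for the pair $(B^a(l^2(\mathcal B)),C(l^2(\mathcal B)))$) is a legitimate and rather elegant alternative route. The ``only if'' direction and the independence-of-$S$ clause go through exactly as you say: setting $S=(1-q)T(1-p)$ gives a compact perturbation with $\ker S=\ran p$, $\ker S^*=\ran q$, and Propositions~\ref{korektnost indeksa} and~\ref{1 plus f je u F}(a) handle the well-definedness of $[\ker S]-[\ker S^*]$.

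There is, however, a genuine gap in your ``if'' direction. From the hypotheses that $\ker S$ and $\ker S^*$ are finitely generated projective you cannot conclude that $S$ restricts to a \emph{bijection} of $\ran(1-p)$ onto $\ran(1-q)$: injectivity on $(\ker S)^\perp$ is clear, but surjectivity onto $(\ker S^*)^\perp$ requires $\ran S$ to be closed, and nothing in the stated hypotheses forces that. (Think of an injective operator with dense, non-closed range: both kernels are zero, hence trivially finitely generated projective, yet $(1-q)S(1-p)=S$ is not invertible.) Mingo's original statement in fact includes the closed-range condition on the perturbation $S$; the paper's formulation is an abbreviation. With that extra hypothesis your open-mapping argument is valid and the rest of your proof goes through.
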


\begin{definition}\label{Mingo def index}
The index of a Fredholm operator $T$ is defined to be the difference (\ref{Mindex}) where $S$ is any compact
perturbation of $T$ \cite[\S1.4.]{Mingo}.
\end{definition}

\begin{definition}\label{Mischenko definition} $T$ is Fredholm in the sense of Mischenko and Fomenko (MF sense in further)
if ($i$) $T$ is adjointable; ($ii$) there are decompositions of the domain $l^2(\mathcal B)=\mathcal
M_1\tilde\oplus\mathcal N_1$ and of the codomain $l^2(\mathcal B)=\mathcal M_2\tilde\oplus\mathcal N_2$,
where $\mathcal N_1$, $\mathcal N_2$ are finitely generated projective submodules and $T$ with respect to
such decompositions has a matrix form $\displaystyle T=\left(\begin{matrix}T_1&0\\0&T_2\end{matrix}\right)$,
with $T_1$ is an isomorphism. \cite[Definition 2.7.4]{TrMa}.

The index of $T$ is $\ind T=[\mathcal N_1]-[\mathcal N_2]$ \cite[Definition 2.7.8]{TrMa}.
\end{definition}

\begin{proposition}\label{Mischenko characterization} If $T$ is invertible modulo compacts then $T$ is Fredholm in MF sense. \cite[Theorem
2.7.14]{TrMa}
\end{proposition}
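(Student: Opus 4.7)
The plan is to invoke the abstract Fredholm machinery of Section 2 with $\mathcal A = B^a(l^2(\mathcal B))$ and $\mathcal F = C(l^2(\mathcal B))$. The pair satisfies Definition \ref{definicija konacnih}: $C(l^2(\mathcal B))$ is a selfadjoint ideal with an approximate unit consisting of projections onto finitely generated projective submodules, and the existence of a proper isometry on $l^2(\mathcal B)$ yields condition $(iii)$ just as in Corollary \ref{AS}. Granted this, Proposition \ref{OurAtkinson}(b) furnishes projections $p, q \in C(l^2(\mathcal B))$ with $T$ invertible up to $(p, q)$, and Proposition \ref{trijagularizacija}(a) allows me to replace $q$ by an equivalent projection so that $qT(1-p) = 0$. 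Write $b$ for the corresponding $(p, q)$-inverse; recall $pb = bq = 0$ and $b = (1-p)b(1-q)$.

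The crucial step is the choice of decompositions. I take
$$\mathcal M_1 = (1-p)l^2(\mathcal B),\quad \mathcal N_1 = (1-bT)p\,l^2(\mathcal B),\quad \mathcal M_2 = (1-q)l^2(\mathcal B),\quad \mathcal N_2 = q\,l^2(\mathcal B).$$
The codomain splitting $l^2(\mathcal B) = \mathcal M_2 \,\tilde\oplus\, \mathcal N_2$ comes from the orthogonal projection $q$. For the domain, the operator $\Pi := 1 - p + bTp$ is a bounded adjointable idempotent in $B^a(l^2(\mathcal B))$ with range $\mathcal M_1$ and kernel $\mathcal N_1$; this reduces to the identities $pb = 0$ and $(1-p)b = b$, which make $\Pi^2 = \Pi$, $\Pi|_{\mathcal M_1} = 1$, and $\Pi|_{\mathcal N_1} = 0$ almost immediate.

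It remains to establish the block-diagonal form and the finitely generated projectivity of $\mathcal N_1$ and $\mathcal N_2$. By triangularization $T(1-p) = (1-q)T(1-p)$, so $T|_{\mathcal M_1}$ is an isomorphism onto $\mathcal M_2$ with inverse given by $b$. For $x' = (1-bT)px \in \mathcal N_1$ one computes $Tx' = Tpx - (Tb)(Tpx)$; the identity $Tb = 1-q$, which follows from $(1-q)Tb = (1-q)T(1-p)b = 1-q$ together with $qTb = qT(1-p)b(1-q) = 0$, then gives $Tx' = qTpx \in \mathcal N_2$, as required. Finally, $\mathcal N_2 = q\,l^2(\mathcal B)$ is finitely generated projective because $q$ is a compact projection, and $\mathcal N_1$ inherits the same property via the bounded module isomorphism $p\,l^2(\mathcal B) \to \mathcal N_1$, $x \mapsto (1-bT)x$, whose two-sided inverse is the restriction of $p$ (using $pb = 0$ again).

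The main obstacle is the conceptual choice of the twisted complement $\mathcal N_1 = (1-bT)p\,l^2(\mathcal B)$. A naive attempt to take $\mathcal N_1 = p\,l^2(\mathcal B)$ fails because $T(p\,l^2(\mathcal B))$ generally carries a nonzero component in $\mathcal M_2$; symmetrically triangularizing on both sides, to enforce both $qT(1-p) = 0$ and $(1-q)Tp = 0$ simultaneously, is not afforded by Proposition \ref{trijagularizacija}, which performs these one at a time and with different equivalent projections. Twisting by $1 - bT$ instead uses the $(p,q)$-inverse $b$ to subtract off the offending component inside the domain, effectively converting the upper-triangular form afforded by Proposition \ref{trijagularizacija}(a) into a genuine block-diagonal form with respect to a non-orthogonal but topologically valid complement.
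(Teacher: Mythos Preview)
Your argument is correct. Note, however, that the paper does not supply its own proof of this proposition: it is quoted verbatim from \cite[Theorem 2.7.14]{TrMa} as an external input, used later in the final Corollary to identify abstract Fredholm operators with those of Mingo and of Mishchenko--Fomenko. What you have done is different in spirit: you derive the MF decomposition directly from the paper's abstract machinery (Proposition~\ref{OurAtkinson}(b) and Proposition~\ref{trijagularizacija}(a)), thereby making the implication ``invertible modulo compacts $\Rightarrow$ MF-Fredholm'' internal to the paper rather than imported. There is no circularity, since the verification that $(B^a(l^2(\mathcal B)),C(l^2(\mathcal B)))$ satisfies Definition~\ref{definicija konacnih} in the final Corollary does not invoke Proposition~\ref{Mischenko characterization}.

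Your twisted complement $\mathcal N_1=(1-bT)p\,l^2(\mathcal B)$ with the idempotent $\Pi=1-p+bTp$ is the right device: the identities $pb=0$ and $(1-p)b=b$ indeed give $\Pi^2=\Pi$, $\ran\Pi=\mathcal M_1$, $\ker\Pi=\mathcal N_1$; the computation $Tb=1-q$ from $qT(1-p)=0$ and $b=(1-p)b(1-q)$ is clean; and the module isomorphism $p\,l^2(\mathcal B)\cong\mathcal N_1$ via $x\mapsto(1-bT)x$ transfers finite projectivity. The proof in \cite{TrMa} proceeds instead by a perturbation-and-stabilization argument specific to the standard module; your route is more algebraic and showcases that the abstract framework of Section~2 already contains the MF characterization as a consequence.
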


Note that the other implication trivially holds. Namely, if $T$ is Fredholm in MF sense, then $\displaystyle
\left(\begin{matrix}T_1^{-1}&0\\0&0\end{matrix}\right)$ is the inverse of $T$ modulo compacts.

We also need the following Lemma.

\begin{lemma}\label{I-Pboundedbelow} Let $l^2(\mathcal B)=\mathcal M\tilde\oplus\mathcal N$, where $\mathcal N$ is finitely
generated and projective and $\mathcal M$ is complemented. Next, let $P$ be the orthogonal projection to
$\mathcal N$. Then $I-P$ is bounded below on $\mathcal M$.
\end{lemma}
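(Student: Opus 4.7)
The plan is to use the bounded (non-orthogonal) projection onto $\mathcal M$ along $\mathcal N$ supplied by the topological direct sum decomposition $l^2(\mathcal B)=\mathcal M\tilde\oplus\mathcal N$, and to exploit the fact that $Pm\in\mathcal N$ lies in the kernel of that projection.

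More precisely, first I would note that $P$ is a genuine orthogonal (and adjointable) projection: this uses that $\mathcal N$ is finitely generated projective, hence orthogonally complementable in $l^2(\mathcal B)$, which is a standard property of finitely generated projective submodules. Next, from $l^2(\mathcal B)=\mathcal M\tilde\oplus\mathcal N$ with both summands closed, the open mapping theorem (or the definition of a topological direct sum) furnishes a bounded idempotent $Q\in B(l^2(\mathcal B))$ with $\ran Q=\mathcal M$ and $\ker Q=\mathcal N$. Let $c=\|Q\|$.

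Now fix $m\in\mathcal M$. Since $Pm\in\mathcal N=\ker Q$, we have $Q(Pm)=0$, and since $m\in\mathcal M=\ran Q$ and $Q$ is idempotent, $Qm=m$. Therefore
\[
m = Qm = Q\bigl(Pm+(I-P)m\bigr) = Q(I-P)m,
\]
which gives
\[
\|m\|=\|Q(I-P)m\|\le\|Q\|\,\|(I-P)m\|=c\,\|(I-P)m\|.
\]
Hence $\|(I-P)m\|\ge c^{-1}\|m\|$ for all $m\in\mathcal M$, as required.

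The only subtle point is justifying that a bounded projection $Q$ with range $\mathcal M$ and kernel $\mathcal N$ exists; but this is immediate from the interpretation of $\tilde\oplus$ as a topological direct sum of closed submodules, so there is no real obstacle. The rest is a one-line computation, so I do not expect a hard step.
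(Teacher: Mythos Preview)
Your proof is correct. Both your argument and the paper's hinge on the same fact---that the topological direct sum $l^2(\mathcal B)=\mathcal M\tilde\oplus\mathcal N$ supplies a bounded (not necessarily adjointable) projection onto $\mathcal M$ along $\mathcal N$---but you use it more efficiently. The paper argues by contradiction: it takes a sequence $x_n\in\mathcal M$ with $\|x_n\|=1$ and $(I-P)x_n\to 0$, decomposes $(I-P)x_n=y_n'+y_n''$ with $y_n'\in\mathcal M$, $y_n''\in\mathcal N$, notes $y_n',y_n''\to 0$ by boundedness of the skew projections, and then obtains $x_n=y_n'\to 0$ from the directness of the sum, contradicting $\|x_n\|=1$. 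Your version names the bounded idempotent $Q$ explicitly and compresses everything into the identity $m=Q(I-P)m$, giving the quantitative bound $\|(I-P)m\|\ge\|Q\|^{-1}\|m\|$ in one line. The two arguments are equivalent in content; yours is simply the direct form of theirs, with the bonus of an explicit lower-bound constant.
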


\begin{proof} Suppose this is not true, i.e.\ that there is a sequence $x_n\in\mathcal M$, $||x_n||=1$,
$(I-P)x_n\to0$. Decompose $(I-P)x_n$ as
\begin{equation}\label{Decompose(I-P)x_n}
(I-P)x_n=y'_n+y''_n,\qquad y'_n\in\mathcal M,y''_n\in\mathcal N.
\end{equation}
Then, $y'_n$, $y''_n\to0$, since $\mathcal M$ and $\mathcal N$ are complemented. However, from
(\ref{Decompose(I-P)x_n}) we obtain $\mathcal M\ni x_n-y'_n=Px_n+y''_n\in\mathcal N$. Thus both sides are
equal to $0$, since the sum $\mathcal M\tilde\oplus\mathcal N$ is direct. Hence $x_n=y'_n\to0$ which gives a
contradiction with $||x_n||=1$.
\end{proof}

\begin{corollary} Let $\mathcal A=l^2(\mathcal B)$ be the standard Hilbert module over some unital $C^*$-algebra $B$ and let
$\mathcal F=C(l^2(\mathcal B))$, the set of all compact operators on $l^2(\mathcal B)$. Then the couple
$(\mathcal A,\mathcal F)$ satisfies conditions in Definition \ref{definicija konacnih}.

Moreover, the set of abstract Fredholm operators, Fredholom operators in the sense of Mingo, and in the MF
sense coincide. Finally all three indices are equal.
\end{corollary}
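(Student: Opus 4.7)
The plan is to verify the three axioms of Definition~\ref{definicija konacnih} for the pair $B^a(l^2(\mathcal B))$ and $C(l^2(\mathcal B))$ (reading the stated $\mathcal A=l^2(\mathcal B)$ as the ambient operator algebra), then to check that the three notions of Fredholm operator coincide, and finally to identify the indices via the natural isomorphism $K(\mathcal F)\cong K_0(\mathcal B)$.

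For the axioms, (i) is standard: $C(l^2(\mathcal B))$ is a closed self-adjoint two-sided ideal. For (ii) I will take $P_n=\sum_{i=1}^n\theta_{e_i,e_i}$, the orthogonal projection onto the first $n$ coordinates; each $P_n\in\mathcal F$, and $\|P_n\theta_{x,y}-\theta_{x,y}\|\to0$ since $P_nx\to x$, then extend to all of $\mathcal F$ by density and norm-closure. For (iii), given projections $p,q\in\mathcal F$ I must produce $v$ with $vv^*=q$ and $v^*v\perp p$. By Lemma~\ref{ekvivalentan nekoj jedinici}, for $n$ large enough $p$ is Murray--von Neumann equivalent to some $p'\le P_n$, and Lemma~\ref{p-q manje 1} supplies a unitary conjugating $p$ to $p'$, reducing matters to the case $p\le P_n$. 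The shift by $n$ coordinates realizes $I-P_n\sim I$ in $B^a(l^2(\mathcal B))$, so there is a partial isometry with initial projection $q$ and final range inside $(I-P_n)(l^2(\mathcal B))\subseteq(I-p)(l^2(\mathcal B))$, which is the required $v$.

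The coincidence of the three notions follows immediately from Atkinson-type characterizations: Proposition~\ref{OurAtkinson} says abstract Fredholm is equivalent to invertibility modulo $\mathcal F$; this is Mingo's definition verbatim, and, by Proposition~\ref{Mischenko characterization} together with the trivial converse noted immediately after it, is likewise equivalent to MF Fredholmness. The equality of indices I would handle using the MF decomposition as a bridge. If $T$ is MF Fredholm with $T=T_1\oplus T_2$ relative to $l^2(\mathcal B)=\mathcal M_1\tilde\oplus\mathcal N_1=\mathcal M_2\tilde\oplus\mathcal N_2$, let $P$, $Q$ be the orthogonal projections onto $\mathcal N_1$, $\mathcal N_2$; they lie in $\mathcal F$ since the $\mathcal N_i$ are finitely generated projective, and by Lemma~\ref{I-Pboundedbelow} the element $T_1=(I-Q)T(I-P)$ is invertible in the corner $(1-Q)\mathcal A(1-P)$. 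Hence the abstract index of $T$ is $[P]-[Q]=[\mathcal N_1]-[\mathcal N_2]$, the MF index. For Mingo, I pick a compact perturbation $S$ of $T$ with $\ker S$ and $\ker S^*$ finitely generated projective; then $S$ is itself MF Fredholm with $\mathcal N_1=\ker S$, $\mathcal N_2=\ker S^*$, so its Mingo index and its MF index both equal $[\ker S]-[\ker S^*]$, and Proposition~\ref{1 plus f je u F}~a) gives $\ind T=\ind S$ in the abstract sense.

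The main obstacle I anticipate is axiom~(iii), together with the clean identification $K(\mathcal F)\cong K_0(\mathcal B)$ needed to make the equality of indices literally meaningful. The former relies on the ``infinite room'' in $l^2(\mathcal B)$ expressed by $I-P_n\sim I$, while the latter rests on the standard Morita-type correspondence between compact projections in $B^a(l^2(\mathcal B))$ and isomorphism classes of finitely generated projective $\mathcal B$-modules; once these are in place, the remaining steps reduce to tracking the same pair of projections through the three definitions.
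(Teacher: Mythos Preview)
Your plan matches the paper's closely. Two minor route differences: for axiom~(iii) the paper cites an external structure theorem (\cite[Theorem 15.4.2]{Wegge}) to place $\ran P$ and $\ran Q$ inside some $\mathcal B^n$, $\mathcal B^m$ and then uses the coordinate shift, whereas you reach the same point via Lemma~\ref{ekvivalentan nekoj jedinici}; this is legitimate since the proof of that lemma uses only axioms (i) and (ii), so there is no circularity. For the Mingo index the paper is more direct than you: if $T$ is invertible up to $(P,Q)$ then $T'=(I-Q)T(I-P)$ is already a compact perturbation of $T$ with $\ker T'=\ran P$ and $\ker T'^*=\ran Q$, so Mingo's index equals $[P]-[Q]$ without passing through the MF picture.

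One point does need repair. In your MF step you write ``$T_1=(I-Q)T(I-P)$'', but the MF decomposition $\mathcal M_i\tilde\oplus\mathcal N_i$ is in general not orthogonal: the orthogonal projections $P,Q$ onto $\mathcal N_1,\mathcal N_2$ need not exist a priori, and even when they do, $(I-Q)T(I-P)$ is not the diagonal block $T_1$ but rather $(I-Q)\circ T_1\circ E|_{\mathcal N_1^\perp}$, where $E$ is the (skew) idempotent onto $\mathcal M_1$ along $\mathcal N_1$. The paper handles this by invoking \cite[Theorem~2.7.6]{TrMa} to choose the MF decomposition so that all four summands are orthogonally complemented and moreover $\mathcal M_1=\mathcal N_1^\perp$; then $T|_{\mathcal M_1}$ is bounded below, and Lemma~\ref{I-Pboundedbelow} applied on the codomain side shows that $(I-Q)T(I-P)\colon\mathcal N_1^\perp\to\mathcal N_2^\perp$ is an isomorphism. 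Insert that appeal to \cite[Theorem~2.7.6]{TrMa} (or supply the short extra argument just indicated) and your proof goes through.
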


\begin{proof}
($i$) It is well known that $\mathcal F$ is a closed ideal.

($ii$) Let $l^2(\mathcal B)$  possesses the standard basis $(e_i), i\in\mathbb{N}$, where
$e_i=(0,...,0,1,0,...)$ with the unit being the i-th entry.

Let $P_n\in\mathcal F$ be sequence of projections on $B^n$, $P_n(x_1,x_2,\dots)=(x_1,\dots,x_n,0,\dots)$ and
let $K\in C(l^2(\mathcal B))$. By \cite[Proposition  2.2.1]{TrMa} we have
$$||K-KP_n||\rightarrow0,\,\,n\rightarrow\infty.$$
Therefore, $P_n$ is an approximate unit.

($iii$) Let $P$, $Q\in\mathcal F$ be projections. By \cite[Theorem 15.4.2]{Wegge}, $P(\mathcal A)$ and
$Q(\mathcal A)$ are isomorphic (as modules) to some direct summand in $\mathcal B^n$ and  $\mathcal B^m$,
respectively, for some $m,n\in\mathbb{N}$. Hence, there is partial isometry
$$V: l^2(\mathcal B)\rightarrow l^2(\mathcal B),\quad V(x_1,x_2,...)=e_{m+1}x_1+e_{m+2}x_2+...+e_{m+n}x_n.$$
Then $P_1=VPV^*$ is orthogonal to $Q$. Consider the operator $U=VP$. We have
$$UU^*=VPV^*=P_1\qquad\text{and}\qquad U^*U=PV^*VP=P,$$
since $V^*V=P_{e_1,\dots,e_n}\ge P$. Also $(UU^*+Q)(UU^*+Q)=VPV^*VPV^*+Q^2=VPV^*+Q$ implying that $UU^*+Q$ is
a projection. Thus, property (iii) is proved.

Proposition \ref{OurAtkinson}, Proposition \ref{Mischenko characterization} together with a comment after it
and Definition \ref{Mingo definition} proves that all three kind of Fredholm operators (abstract, in the
sense of Mingo and in MF sense) coincide. Let us prove that their indices are equal.

Let $T$ be a Fredholm operator, and let $T$ be invertible up to $(P,Q)$. Then $T'=(I-Q)T(I-P)$ is a compact
perturbation of $T$ (its difference is $QT(I-P)+(I-Q)TP+QTP$), and $\ker T'=P(l^2(\mathcal B))$, $\ker
T'^*=Q(l^2(\mathcal B))$. Hence, abstract index is equal to Mingo's index.

Let $T$ be a Fredholm operator, and let $\displaystyle T=\left(\begin{matrix}T_1&0\\0&T_2\end{matrix}\right)$
with respect to decompositions $l^2(\mathcal B)=\mathcal M_1\tilde\oplus\mathcal N_1=\mathcal
M_2\tilde\oplus\mathcal N_2$ of the domain and the codomain, respectively. By \cite[Theorem 2.7.6]{TrMa},
$\mathcal M_1$, $\mathcal N_1$, $\mathcal M_2$, $\mathcal N_2$ can be chosen such that all of them are
complemented, $\mathcal N_1$ and $\mathcal N_2$ are finitely generated projective and also $\mathcal
M_1=\mathcal N_1^\bot$. (In truth, the last equality is not emphasized in the statement, but it follows from
the proof.) In the same proof we can found that $T|_{\mathcal M_1}$ is bounded below. Denote by $P$ and $Q$
the orthogonal projections to $\mathcal N_1$ and $\mathcal N_2$. By Lemma \ref{I-Pboundedbelow}, $I-Q$ is
bounded below on the space $\mathcal M_2=T(\mathcal M_1)=\ran T(I-P)$. This ensures that $(I-Q)T(I-P)$ is an
isomorphism from $\mathcal M_1=\mathcal N_1^\bot$ onto $\mathcal N_2^\bot$. Thus the abstract index is
$[P]-[Q]=[\mathcal N_1]-[\mathcal N_2]$, i.e.\ it is equal to MF index.
\end{proof}

\bibliographystyle{plain}
\bibliography{FredholmCstar}

\end{document}